\newtheorem{corollary}{Corollary}[section]
\newtheorem{theorem}[corollary]{Theorem}
\newtheorem{lemma}[corollary]{Lemma}
\newtheorem{proposition}[corollary]{Proposition}
\newtheorem{claim}[corollary]{Claim}
\newtheorem{remark}[corollary]{Remark}
\newtheorem*{theorem*}{Theorem}
\newtheorem*{corollary*}{Corollary}
\numberwithin{equation}{section}
\title{Improved bounds for the zeros of the chromatic polynomial via Whitney's Broken Circuit Theorem}
\author[M. Jenssen]{Matthew Jenssen}
\address{Matthew Jenssen, Department of Mathematics, King's College London, Strand, London,
WC2R 2LS}
\email{\texttt{matthew.jenssen@kcl.ac.uk}}
\author[V. Patel]{Viresh Patel}
\address{Viresh Patel, School of Mathematical Sciences, Queen Mary University of London, Mile End Road, London, E1 4NS}
\email{\texttt{viresh.patel@qmul.ac.uk}}
\author[G.Regts]{Guus Regts}
\address{Guus Regts, Korteweg de Vries Institute for Mathematics, University of Amsterdam. P.O. Box 94248  
1090 GE Amsterdam The Netherlands}
\email{\texttt{guusregts@gmail.com}}
\thanks{MJ is supported by a UK Research and Innovation Future Leaders Fellowship MR/W007320/2.}
\thanks{GR was funded by the Netherlands Organisation of Scientific Research (NWO): VI.Vidi.193.068}
\date{\today}
\begin{document}

\maketitle
\begin{abstract}
We prove that for any graph $G$ of maximum degree at most $\Delta$, the zeros of its chromatic polynomial $\chi_G(x)$ (in $\mathbb{C}$) lie inside the disc of radius $5.94 \Delta$ centered at $0$. This improves on the previously best known bound of approximately $6.91\Delta$.

We also obtain improved bounds for graphs of high girth. We prove that for every $g$ there is a constant $K_g$ such that for any graph $G$ of maximum degree at most $\Delta$ and girth at least $g$, the zeros of its chromatic polynomial $\chi_G(x)$ lie inside the disc of radius $K_g \Delta$ centered at $0$, where $K_g$ is the solution to a certain optimization problem. In particular, $K_g < 5$ when $g \geq 5$ and $K_g < 4$ when $g \geq  25$ and $K_g$ tends to approximately $3.86$ as $g \to \infty$. 

Key to the proof is a classical theorem of Whitney which allows us to relate the chromatic polynomial of a graph $G$ to the generating function of so-called broken-circuit-free forests in $G$. We also establish a zero-free disc for the generating function of all forests in $G$ (aka the partition function of the arboreal gas) which may be of independent interest.  
\\
\quad \\
\footnotesize{{\bf Keywords}: chromatic polynomial, forest generating function, zeros, broken circuit, maximum degree.}
\end{abstract}

\section{Introduction}
The chromatic polynomial $\chi_G(x)$ of a graph $G=(V,E)$ can be defined by
\[\chi_G(x)=\sum_{A\subseteq E} (-1)^{|A|} x^{\kappa(A)},\]
where $\kappa(A)$ denotes the number of components of the graph $(V,A)$.
This polynomial has the property that, when evaluated at a positive integer $q$, it counts the number of proper $q$-colourings of the graph $G$.

Birkhoff~\cite{Birk} introduced the chromatic polynomial (for planar graphs) and was interested in its zeros, henceforth called \emph{chromatic zeros}, as a way of approaching (what was then) the four colour conjecture. So far this approach has not been successful; however since then numerous results have appeared concerning real chromatic zeros of planar graphs~\cites{BirkhoffLewis46,Tuttegoldenratio,Jackson93,Thomassen97,Royle08,ThomassenPerret2018}, with still several open questions remaining~\cites{ThomassenPerret2018,handbook}.
Sokal~\cite{Sokaldense} showed that the complex chromatic zeros lie dense in the complex plane, while this remains open for chromatic zeros of planar graphs.
Chromatic zeros have been extensively studied for other families of graphs as well; we refer the reader to~\cites{sokal,Jackson93,handbook} and the references therein for a comprehensive overview. 

In the present paper we are interested in the chromatic zeros for bounded degree graphs.
%Most relevant to the present paper are~\cites{sokal,Borgs,Fernandez,JPSzeros}. 
Brenti, Royle and Wagner~\cite{BRW94} asked if there exists some function $f: \mathbb{N} \rightarrow \mathbb{R}$ such that for every graph of maximum degree $k$, its (complex) chromatic zeros lie inside the disc of radius $f(k)$ centered at $0$. This extended an earlier conjecture of Biggs, Darmerell and Sands~\cite{BDS72} stated only for regular graphs.
Sokal~\cite{sokal} answered these questions affirmatively in a strong sense by showing that there exists an absolute constant $K \approx 7.97$  with the property that, for every graph $G$, the complex roots of $\chi_G$ lie inside the disc of radius $K \Delta(G)$ centered at $0$, where $\Delta(G)$ is the maximum degree of $G$. 
Borgs~\cite{Borgs} later provided a short alternative proof of Sokal's result. 
The constant was subsequently improved to $K \approx 6.91$ by Fern{\'a}ndez and Procacci~\cite{Fernandez}, and Jackson, Procacci, and Sokal~\cite{JPSzeros}. 
Our main contribution is an improvement on these bounds.
\begin{theorem} \label{thm:main}
 There exists a constant $K\leq 5.94$ such that for every graph $G$, $\chi_{G}(x)\ne 0$ for all $z\in \mathbb{C}$ with $|x|\ge K\Delta(G)$.
\end{theorem}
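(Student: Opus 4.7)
The plan is to reduce, via Whitney's Broken Circuit Theorem, to a zero-freeness problem for the generating function of broken-circuit-free (BC-free) forests, and then to attack that problem with a cluster expansion. First, fix an arbitrary linear order $\prec$ on $E(G)$ and apply Whitney's identity
\[
\chi_G(x) \;=\; \sum_{F \text{ BC-free}} (-1)^{|F|}\, x^{|V(G)|-|F|}.
\]
Since any cycle $C$ contains the broken circuit $C\setminus\{e_{\max}\}$, every BC-free edge set is automatically a forest. Dividing by $x^{|V(G)|}$ and substituting $u = -1/x$, the theorem reduces to proving that the polynomial
\[
Z_G(u) \;:=\; \sum_{F \text{ BC-free}} u^{|F|}
\]
has no zero in the disc $|u| \leq 1/(5.94\,\Delta)$.

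Next I would view $Z_G(u)$ as a polymer partition function. A forest is BC-free if and only if each of its tree components is itself BC-free (broken circuits being connected paths), so
\[
Z_G(u) \;=\; \sum_{\{T_1,\dots,T_k\}} \prod_{i=1}^k u^{|E(T_i)|},
\]
the sum being over collections of pairwise vertex-disjoint BC-free subtrees of $G$, each with at least one edge. A standard convergence criterion for abstract polymer models---Kotecký--Preiss, or the sharper Fern\'andez--Procacci refinement---then guarantees $Z_G(u) \neq 0$ as soon as one produces a nonnegative weight $a(T)$ on polymers such that, for every vertex $v$,
\[
\sum_{T \ni v,\ T \text{ BC-free}} |u|^{|E(T)|}\, e^{a(T)} \;\leq\; a_0,
\]
with $a_0$ the absolute constant dictated by the criterion.

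The decisive---and, I expect, most difficult---step is the combinatorial estimate of the number of BC-free subtrees of $G$ through a given vertex. For arbitrary subtrees (with no BC constraint), the classical Bollob\'as/Borgs bound of roughly $(e\Delta)^k$ rooted subtrees on $k$ edges, plugged into the criterion above, reproduces constants in the range $6.91$--$7.97$ obtained by Sokal, Fern\'andez--Procacci, and Jackson--Procacci--Sokal. To push the constant lower one must genuinely exploit the BC-free restriction: a well-chosen $\prec$ (for instance, a breadth-first order centred on the polymer's root) should render a definite fraction of the candidate edge extensions forbidden at each step, because adding them would complete a cycle whose broken circuit already lies in the partially grown tree. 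Turning this heuristic into a sharp bound of the form $N^{\text{BC}}_k(v) \leq C\,\rho^k$ with $\rho$ strictly smaller than $e\Delta$, and then optimizing the weight $a(\cdot)$ against this count, should deliver the improved constant $K \leq 5.94$. The auxiliary zero-free disc for the arboreal gas announced in the abstract ought to fall out of the same framework, applied to the easier (but still nontrivial) problem where the BC-free restriction is dropped.
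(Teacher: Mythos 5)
Your reduction via Whitney's theorem to zero-freeness of $F_G$ on a disc, and the reformulation as a polymer partition function with BC-free trees as polymers, match the paper's starting point exactly. However, the step you yourself flag as ``decisive'' is where the argument breaks. You propose to push below $6.91$ by showing that the BC-free restriction cuts the count of rooted $k$-edge subtrees down to $C\rho^k$ with $\rho$ strictly below $e\Delta$, and then feeding that into the Kotecký--Preiss or Fernández--Procacci criterion. There is no such uniform gain. For a graph of girth $g$ every cycle has at least $g$ edges, so \emph{every} subtree on fewer than $g-1$ edges is automatically BC-free for \emph{any} edge order; the BC-free and unrestricted rooted-subtree counts coincide for all $k\le g-2$, and on a near-regular graph they are of order $(e\Delta)^k$. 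Since short trees dominate the polymer sums when $|u|\asymp 1/\Delta$, your proposed bound cannot improve on what the Fernández--Procacci criterion already yields for unrestricted trees, namely $K\approx 6.91$. In fact the improvement in the theorem runs in the \emph{opposite} direction: the paper's constants are best when the girth is large, i.e.\ precisely when there are the fewest broken circuits available to prune the tree count. There is also a technical obstruction in ``a well-chosen $\prec$ centred on the polymer's root'': the polymer model and its convergence criterion require a single fixed edge order, whereas the paper is free to re-order the edges at each stage of its induction precisely because it does not invoke an external criterion.

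The paper's real mechanism does not involve shrinking a polymer count. It runs a direct vertex-deletion induction (inspired by the Gruber--Kunz proof, not applied as a black box) and, at each step, exploits the exact telescoping identity $\sum_{F\in\mathcal F_{V',S}} x^{|F|}\,F_{V'-S-V(F)}/F_{V'}=1$ coming from the recursion \eqref{eq:recurF}. The quantity to control is the deviation of the BCF-constrained sum from this exact value of $1$, and Proposition~\ref{pr:BFStree} shows that the deviation is supported on forests containing a \emph{double}-rooted tree joining two neighbours of the pivot vertex $u$: either two vertices of $S$ lie in one component (set $\mathcal A$) or a component joins $s\in S$ to some later $t\in N(u)$ (set $\mathcal B$). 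Both are controlled by the double-rooted tree generating function of Lemma~\ref{lem:DR-tree}, which is small and shrinks further with the girth; that, not a reduced BCF-tree count, is the source of both $5.94$ and the girth improvement. Finally, the auxiliary bound for the arboreal gas does not ``fall out'' of the same framework: dropping the BC restriction \emph{enlarges} the polymer class and would give a worse constant, whereas Theorem~\ref{thm:forest} attains $1/(2\Delta)$ via an entirely separate contraction-and-path recursion.
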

For graphs of large girth we can obtain better bounds.
\begin{theorem} \label{thm:large girth}
For each $g\geq 3$ there exists a constant $K_g$ such that $\lim_{g\to \infty}K_g\leq 3.86$ and for every graph $G$ of girth at least $g$, $\chi_{G}(x)\ne 0$ for all $x\in \mathbb{C}$ with $|x|\ge K_g\Delta(G)$. 
\end{theorem}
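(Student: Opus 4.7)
Following the outline sketched in the abstract, the plan is to use Whitney's Broken Circuit Theorem to translate the question about chromatic zeros into one about zeros of a generating function for broken-circuit-free forests, and then to exploit the girth hypothesis to reduce this to the easier problem of locating zeros of the arboreal gas partition function on the $\Delta$-regular tree.

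Concretely, fix any linear order on the edges of $G$. Whitney's theorem then gives the expansion
\[
\chi_G(x) \;=\; \sum_{F \text{ bc-free}} (-1)^{|F|} x^{n-|F|},
\]
where the sum is over broken-circuit-free subsets $F \subseteq E$, each of which is automatically a forest. Substituting $z = -1/x$, it suffices to establish that the polynomial
\[
P_G(z) \;:=\; \sum_{F \text{ bc-free}} z^{|F|}
\]
has no zero in a disc $|z| < 1/(K_g \Delta)$. I would attack this by a Ruelle-type contraction: fix a vertex $v$ and derive a recursion for the ratio $P_G(z)/P_{G-v}(z)$ by expanding over the edges of $G$ incident to $v$. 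For the infinite $\Delta$-regular tree $T_\Delta$ (where broken-circuit-freeness is vacuous) the recursion collapses to a one-dimensional fixed-point problem for the arboreal gas, whose analysis is expected to produce the asymptotic constant $\approx 3.86$. For a general graph of girth at least $g$ one exhibits an invariant disc in $\mathbb{C}$ preserved by the recursion, which simultaneously forbids $P_G(z) = 0$ and yields $K_g$ as the solution of the natural optimization problem over such invariant discs.

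The girth hypothesis enters as a controlled perturbation. Because every broken circuit has at least $g-1$ edges, $P_G(z)$ agrees with the unrestricted forest generating function $F_G(z) := \sum_{F \text{ forest}} z^{|F|}$ up to terms of order $z^{g-1}$. Hence the corrections arising from excluding broken circuits contribute a high-order error in the recursion that can be absorbed into a girth-dependent factor tending to $1$ as $g \to \infty$, so that the $g$-dependent zero-free threshold converges to the corresponding threshold for the full arboreal gas on $T_\Delta$. The main obstacle, I expect, is twofold: first, producing a tight enough tree recursion to reach the asymptotic $3.86$ rather than the $6.91$-type constants of previous cluster-expansion approaches, and second, controlling the broken-circuit correction uniformly in $g$ so that the same argument gives numerically clean bounds at small girth (such as the $g \geq 5$ and $g \geq 25$ regimes highlighted in the introduction). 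A careful choice of invariant region, combined with a quantitative polymer-style bound on the broken-circuit defect in terms of $|z|\Delta$ and $g$, should close the argument.
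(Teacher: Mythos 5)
Your high-level skeleton — Whitney's theorem to pass to the BCF generating polynomial, then an inductive Ruelle/ratio recursion for $F_G/F_{G-v}$ with an invariant disc — is indeed the paper's plan for the heavier Theorem~\ref{thm:fgbd}. But two of your specific claims about how girth enters and what the $g\to\infty$ limit is are wrong, and they are wrong in a way that would prevent the argument from closing.

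First, your identification of the limiting problem. You assert that as $g\to\infty$ the $g$-dependent zero-free threshold ``converges to the corresponding threshold for the full arboreal gas on $T_\Delta$,'' and hope the constant $\approx 3.86$ drops out of that analysis. It does not. On a tree, \emph{every} edge subset is a forest, so the arboreal gas generating function on $T_\Delta$ is a product of factors $(1+x)$ and is trivially nonzero on $|x|<1$; no constant like $3.86$ can arise from it. More tellingly, the paper's own zero-free disc for the general arboreal gas on max-degree-$\Delta$ graphs (Theorem~\ref{thm:forest}) has radius $\tfrac{1}{2\Delta}$, i.e.\ would give $K=2$, which is \emph{stronger} than $3.86$; the paper explicitly remarks that the $F_G$-inductive scheme underlying Theorem~\ref{thm:fgbd} is weaker than the dedicated Theorem~\ref{thm:forest} argument when applied to forests. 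So the $g\to\infty$ threshold $K_\infty\approx 3.86$ is an artefact of the inductive framework — concretely it is $\min_{(a,b)} b/\bigl((1-a)\ln b\bigr)$ subject to $\exp\!\bigl((1-a)\ln b / b\bigr)-1\le a$ with $b<e$ — and has no interpretation as an arboreal gas threshold. If you tried to make the reduction you describe literal, you would either land on the trivial $T_\Delta$ problem (which gives nothing) or on the max-degree-$\Delta$ forest problem (which gives $K=2$, not $3.86$), and neither matches what the theorem needs you to prove.

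Second, the mechanism. You note that a broken circuit has at least $g-1$ edges, so $F_G(z)$ agrees with the forest generating function $Z_G(z)$ up to terms of order $z^{g-1}$, and you hope this coefficient-level agreement suffices to control a ``high-order error.'' Agreement of low-order coefficients does not, by itself, say anything about where the zeros are. What actually carries the girth information in the paper is entirely different: in the ratio recursion at a vertex $u$, the error term is indexed by pairs $s,t\in N(u)$ and is controlled by the \emph{double-rooted tree generating function} $T_{G-u,s,t}(y)$ (Lemma~\ref{lem:DR-tree}). Girth $\ge g$ forbids paths of fewer than $g-1$ vertices between two neighbours of $u$ in $G-u$ (else $u$ would close a short cycle), which pushes the leading path in any such tree to have length at least $g-2$ and yields the suppression $T_{G-u,s,t}\le (\ln b)^{g-1}D(b)$. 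That geometric decay in the \emph{internal structure of the recursion error}, not a coefficient comparison between two polynomials, is what makes $f_g(a,b)$ decrease with $g$ and lets you take $a,b$ closer to the boundary of the feasible region. You would need this localized version of the girth argument, and you would still need to set up and solve the associated two-parameter optimization (with the constraint $b<e$ from the tree-generating-function lemma), before the constant $3.86$ could appear.
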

We will bound $K_g$ by estimating the optimal solution to a certain constrained optimization problem. We refer to Figure 1 below for concrete bounds on $K_g$ for several values of $g$.

% For bipartite graphs we can moreover obtain better bounds than the bound on the constant $K_4$ from Table~\ref{tab:bounds}. 
% \begin{theorem} \label{thm:bipartite}
% There exists a constant $K_{\text{bip}}\leq 5.045$ such that for any bipartite graph $G$, $\chi_{G}(z)\ne 0$ for all $z\in \mathbb{C}$ with $|z|\ge K_{\text{bip}}\Delta(G)$. %Moreover $\lim_{g\to \infty}K_g=e+1$.
% \end{theorem}

Note that, since there are graphs of maximum degree $\Delta$ with no proper $\Delta$-colouring, we cannot take $K<1$ in Theorem~\ref{thm:main}. In fact, in general, we cannot take $K < 1.599$ (see Section 9 of \cite{Sokalsurvey}). 
However, it is conjectured by Sokal (see \cite[Conjecture 21]{Jackson}, and Sokal~\cite[Conjecture 9.5'']{Sokalsurvey}) that $\chi_G(x) \not= 0$ if ${\rm Re}(x) > \Delta(G)$. A positive resolution of this conjecture would resolve a major open problem in the field of approximate counting: it would imply (see \cite{PR17}) the existence of a fully polynomial time approximation scheme for counting proper $q$-colourings of a graph of maximum degree $\Delta$ for $q\geq \Delta+1$. 
We note that a mildly stronger conjecture of Sokal, \cite[Conjecture 9.5']{Sokalsurvey} allowing one vertex of unbounded degree in the previous conjecture, is not true.\footnote{Counterexamples for $
\Delta=3$ were found by Royle, see~\cite[Footnote 31]{Sokalsurvey}. More recently, counterexamples were found for all but a finite range of values of $\Delta$~\cite{BHRseriesparallelroots23}.}

Our approach, which we detail in the next section, motivates the study of the roots of the \emph{forest generating function} of a graph $G=(V,E)$ defined as
\begin{align}\label{eq:forestgnfn}
    Z_G(x) = \sum_{\substack{F\subseteq E: \\(V,F) \text{ is a forest}}} x^{|F|}\, .
\end{align}

$Z_G(x)$ is the partition function of the \emph{arboreal gas} which has been studied significantly in the statistical physics and probability literature in recent years~\cites{halberstam2023uniqueness, bauerschmidt2021percolation, bauerschmidt2021random, caracciolo2004fermionic, caracciolo2017spanning, deng2007ferromagnetic}. We prove the following result for the roots of the forest generating function on bounded degree graphs. 

\begin{theorem}\label{thm:forest}
Let $G=(V,E)$ be a graph and let $v\in V$. Suppose that all vertices of $G$, except possibly $v$ have degree at most $\Delta\in \mathbb{N}$.
Then for any $x$ such that $|x|\leq \tfrac{1}{2\Delta}$, $Z_G(x)\neq 0$.
\end{theorem}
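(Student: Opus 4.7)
My plan is to prove Theorem~\ref{thm:forest} by induction on $|V(G)|+|E(G)|$, working throughout in the class of (multi)graphs with one distinguished vertex $v$ of arbitrary degree and all other vertices of degree at most $\Delta$. The base cases are trivial, and the case where $v$ is isolated reduces via $Z_G(x)=Z_{G-v}(x)$ to a smaller instance. Otherwise, for an edge $e=vu$ incident to $v$, I apply the deletion--contraction identity
\[
Z_G(x)=Z_{G-e}(x)+x\,Z_{G/e}(x),
\]
where $G/e$ denotes the multigraph obtained by contracting $e$ and deleting the resulting loops, while keeping any parallel edges created by shared neighbours of $u$ and $v$ (with a forest of $G/e$ required to use at most one edge from each parallel pair). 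Both $G-e$ and $G/e$ satisfy the hypothesis with the same $\Delta$: in $G-e$ only $u$'s degree changes, while in $G/e$ the merged vertex inherits $v$'s exceptional role and every other degree (counting multiplicity) is unchanged. By induction $Z_{G-e}(x)\ne 0$ on $|x|\le \tfrac{1}{2\Delta}$, so it suffices to prove
\[
\Bigl|\frac{x\,Z_{G/e}(x)}{Z_{G-e}(x)}\Bigr|<1
\]
on this disc.

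Since forests of $G/e$ correspond bijectively to forests of $G-e$ in which $v$ and $u$ lie in distinct components, $Z_{G/e}(x)=Z^{v\not\sim u}_{G-e}(x)$. For real $x\in[0,\tfrac{1}{2\Delta}]$ the ratio is at most $1$ by positivity and the bound is immediate; the heart of the argument is extending this to complex $x$. My approach is to strengthen the inductive hypothesis so as to maintain simultaneously an explicit upper bound on the modulus of a partition-function ratio --- a natural candidate being $|Z^{v\not\sim u}_{G-e}(x)/Z_{G-e}(x)|\le c$ for a suitable constant $c$. To close such a bound, one writes $Z^{v\sim u}_{G-e}(x)=Z_{G-e}(x)-Z^{v\not\sim u}_{G-e}(x)$ and expands the former by conditioning on the unique $v$--$u$ path $P$ in the forest, giving
\[
Z^{v\sim u}_{G-e}(x)=\sum_{P\colon v\to u}x^{|P|}\,Z_{(G-e)/P}(x),
\]
where $P$ ranges over simple $v$--$u$ paths in $G-e$.

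The crucial feature here is that $v$ occurs only at the endpoint of any such path, so the intermediate vertices all have degree at most $\Delta$; counting walks from $u$ that avoid $v$ until the last step, the number of simple paths of length $\ell$ is bounded by $\Delta^{\ell-1}$, \emph{independent of $\deg(v)$}. This produces a geometric sum $\sum_{\ell\ge 2}\Delta^{\ell-1}|x|^\ell=\tfrac{|x|^2\Delta}{1-|x|\Delta}$, which is exactly $\tfrac{1}{2\Delta}$ at the boundary $|x|=\tfrac{1}{2\Delta}$ --- precisely accounting for the constant $2$ in the theorem. The main obstacle, as I see it, is the inductive control of the ratios $|Z_{(G-e)/P}(x)/Z_{G-e}(x)|$ appearing in the expansion: these involve contractions of paths inside $G-e$, and the inductive invariant must be chosen carefully so as to self-improve under such contractions and close the induction at the boundary. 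Handling the multigraph structure produced by contraction introduces routine technical book-keeping, but the algebraic shape of the argument remains the same.
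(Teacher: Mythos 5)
Your proposal has the right architecture and you correctly identify where the constant $\tfrac{1}{2\Delta}$ comes from (the geometric series over paths, controlled because all vertices on a $u$\nobreakdash-$v$ path other than the endpoints have degree at most $\Delta$). But you have openly left the decisive step unproved: bounding the ratios $\bigl|Z_{(G-e)/P}(x)/Z_{G-e}(x)\bigr|$ after contracting a path $P$. This is not ``routine technical book-keeping'' --- it is the heart of the argument, and your candidate invariant $\bigl|Z^{v\not\sim u}_{G-e}/Z_{G-e}\bigr|\le c$ is not the one that closes. In particular, if you telescope the contraction of $P$ edge by edge starting from the $u$-end, the intermediate multigraphs acquire a second vertex of large degree (the one absorbing the $u$\nobreakdash-side of $P$), and the degree hypothesis on which everything rests is destroyed.

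The paper resolves this with two ideas you are missing. First, the inductive invariant is $\bigl|Z_G(x)/Z_{G/e}(x)-1\bigr|\le \tfrac12$ for any edge $e$ incident to the one exceptional vertex $v$; this is an invariant about contraction, not deletion, and it is the quantity that composes cleanly along a chain of single-edge contractions. Second, to telescope along a path $P$ from $u$ to $v$, one does \emph{not} contract $P$ itself; one first replaces $P$ by the edge set $P^*$ obtained by deleting the final edge $vw$ of $P$ and inserting $e=uv$, noting $G/P=G/P^*$. Contracting $P^*$ edge by edge, starting with $e=uv$, ensures that at every intermediate stage the only possibly-large-degree vertex is the one that has absorbed $v$, so the invariant applies at each step and yields $\bigl|Z_{G/P}/Z_{G/e}\bigr|\le 2^{|P|-1}$. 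Plugging this into the path-generating-function bound (the analogue of your $\Delta^{\ell-1}$ count, stated as an inductive inequality $\sum_P (c/\Delta)^{|P|}\le c$ that also handles parallel edges, including length\nobreakdash-$1$ paths which your $\ell\ge 2$ sum silently omits once multigraphs arise) closes the induction. Without the $P^*$ reordering and the contraction-ratio invariant, your induction does not close, so as written the proposal falls short of a proof.
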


Zero-freeness results such as the one above are of interest in statistical physics as they imply the absence of phase transitions in the Lee-Yang sense~\cite{yang1952statistical}. It would be interesting to determine whether the $1/2$ in  Theorem~\ref{thm:forest} is optimal and whether there is a zero-free region containing $0$ that contains a larger portion of the positive real axis. We note that Theorem 1.3 from~\cite{BencCsikvari22} implies that $1/(2 \Delta)$ in the above theorem cannot be replaced by $\tfrac{1}{\Delta-2}$, indeed it implies there cannot be an open zero-free region in the complex plane containing the real interval $[0,\tfrac{1}{\Delta-2}]$. 

We remark that all graphs are assumed to be simple unless stated otherwise.

\subsection{Approach}
Our approach is inspired by the approach taken in~\cites{sokal,Borgs,Fernandez,JPSzeros} but differs crucially in our use of Whitney's Theorem for describing the chromatic polynomial. 
Using the language of statistical physics, the approach in~\cites{sokal,Borgs,Fernandez,JPSzeros} is to relate the chromatic polynomial to the \emph{partition function} of a \emph{polymer model}. A polymer model consists of three ingredients: (i) a (finite) set $\mathcal{P}$ whose elements we call polymers; (ii) a symmetric and reflexive `compatibility' relation $\sim$ on $\mathcal{P}$; and (iii) a weight function $\omega: \mathcal{P}\to\mathbb{C}$. The partition function of this polymer model is then
\[
\Xi(\mathcal{P})=\sum_{\Gamma} \prod_{\gamma\in\Gamma} w(\gamma)\, ,
\]
where the sum is over all collections $\Gamma$ of mutually compatible polymers. 
The motivation for relating the chromatic polynomial to such a partition function is that one can then apply the extensive theory of polymer models from statistical physics; in particular results that constrain the location of zeros of $\Xi(\mathcal{P})$ considered as a function of the complex weights $w(\gamma)$. For example, Sokal~\cite{sokal} and Borgs~\cite{Borgs} apply Dobrushin's classical zero-freeness criterion~\cite{dobrushin1996estimates} whereas Fern{\'a}ndez and Procacci applied their own more recent zero-freeness criterion~\cite{fernandez2007cluster} and Jackson, Sokal and Procacci~\cite{JPSzeros} used the Gruber-Kunz criterion~\cite{BFPgruberkunzbound}. The main difficulty in these proofs is to verify that the conditions of the zero-freeness criteria hold. 

In each of the aforementioned papers the set of polymers $\mathcal{P}$ is taken to be vertex subsets of $G$ that induce a connected subgraph of $G$. We take a different approach by applying Whitney's classical Broken Circuit Theorem (see below) to relate the chromatic polynomial of $G$ to the partition function of a model in which polymers are the edge sets of so-called `broken-circuit-free' trees in $G$. 
We moreover do not directly rely on any results from statistical physics that constrain the location of zeros of $\Xi(\mathcal{P})$, but rather use the proofs of these results as inspiration. 
Indeed, inspired by the proof of~\cite[Proposition 3.1]{BFPgruberkunzbound}, we give a direct and new inductive proof for the zero-freeness of the chromatic polynomial that takes advantage of the tree structure of polymers as well as the specific structure of broken-circuit-free trees. 
To go into more detail we now introduce Whitney's theorem. 

\subsubsection{Whitney's Broken Circuit Theorem} A classical theorem of Whitney~\cite{Whitney} gives a combinatorial description of the coefficients of the chromatic polynomial in terms of so-called \emph{broken-circuit-free sets}. For a graph $G = (V,E)$ fix any ordering of $E$. A \emph{broken circuit} in the graph $G$ is obtained by taking the edges of any cycle of $G$ and removing its largest edge (in the given ordering). A \emph{broken-circuit-free set} in $G$ is any set of edges $F \subseteq E$  that does not contain a broken circuit. Equivalently, $F \subseteq E$ is broken-circuit free if and only if
\begin{itemize}
\item $F$ has no cycles (i.e.\ is a forest) and 
\item for every $e \in E \setminus F$, if $F \cup e$ contains a cycle (which is then necessarily unique and contains $e$), then $e$ is not the largest edge of that cycle. 
\end{itemize}
From now on, we abbreviate ``broken circuit free'' by BCF.
Let us write $\mathcal{F}_G$ for the set of all BCF sets in $G$ (including the empty set). 
We define the polynomial
\begin{align}\label{eq:FGDef}
    F_G(x) = \sum_{F \in \mathcal{F}_G}x^{|F|}.
\end{align}
While $\mathcal{F}_G$ depends on the ordering of the edges, the polynomial $F_G$ remarkably does not.
%Remarkably, for each integer $i$, the number of broken-circuit-free sets of size $i$ in $G$ is independent of the ordering on the edges of $G$ and 
Moreover, the polynomial $F_G$ is up to a simple transformation equal to the chromatic polynomial: 

\begin{theorem}[Whitney \cite{Whitney}]\label{thm:whitney}
If $G$ is an $n$-vertex graph, then 
\[
\chi_G(x) = x^nF_G(-1/x). 
\]
%where $a_{n-i}$ is the number of broken circuit free sets of size $i$ in $G$ for $i = 0, \ldots, n-1$. (In particular it is easy to see that $a_n = 1$ and $a_{n-1} = |E|$.) 
\end{theorem}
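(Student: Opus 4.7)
The plan is to prove Whitney's identity via a sign-reversing involution on subsets of $E$. First I would rewrite the target identity in a more convenient form. Since every $F \in \mathcal{F}_G$ is a forest, $\kappa(F) = n - |F|$, so expanding
\[
x^n F_G(-1/x) = \sum_{F \in \mathcal{F}_G}(-1)^{|F|}x^{n-|F|} = \sum_{F \in \mathcal{F}_G}(-1)^{|F|}x^{\kappa(F)}.
\]
Comparing with the subset-expansion of $\chi_G(x)$, the statement reduces to showing that
\[
\sum_{A \subseteq E,\ A \notin \mathcal{F}_G}(-1)^{|A|}x^{\kappa(A)} = 0.
\]

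Next I would construct an involution $\phi$ on the non-BCF subsets of $E$ that preserves $\kappa$ and flips the sign $(-1)^{|A|}$. Fix the edge ordering $e_1 < \dots < e_m$. For each non-BCF $A$, let $e^\star(A)$ be the smallest edge $e$ which is the largest edge of some cycle $C$ of $G$ satisfying $C \setminus \{e\} \subseteq A$; this exists precisely because $A$ contains a broken circuit. Then define
\[
\phi(A) := A \,\triangle\, \{e^\star(A)\}.
\]
Plainly $|\phi(A)| = |A| \pm 1$, so the signs cancel, and $\phi(A)$ is still non-BCF (the witness broken circuit $C\setminus\{e^\star\}$ is contained in both $A$ and $\phi(A)$).

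Two things remain to check, and the subtle step is the second. For $\kappa$-invariance: the endpoints of $e^\star$ are joined by the path $C \setminus \{e^\star\} \subseteq A \cap \phi(A)$, so toggling $e^\star$ does not change the number of components. For $\phi$ being an involution, I need $e^\star(\phi(A)) = e^\star(A)$. If some $e' < e^\star(A)$ were a smaller witness in $\phi(A)$, then $e'$ would be the maximum edge of some cycle $C'$ with $C' \setminus \{e'\} \subseteq \phi(A)$. But $\max(C') = e' < e^\star$, so $e^\star \notin C' \setminus \{e'\}$, meaning membership of $C' \setminus \{e'\}$ in $A$ and in $\phi(A) = A \triangle \{e^\star\}$ is the same. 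This would contradict the minimality of $e^\star(A)$; hence $e^\star(\phi(A)) = e^\star(A)$ and $\phi \circ \phi = \mathrm{id}$.

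With $\phi$ in hand, the non-BCF contributions cancel in pairs, yielding $\chi_G(x) = \sum_{F \in \mathcal{F}_G}(-1)^{|F|}x^{\kappa(F)} = x^n F_G(-1/x)$. The only genuinely delicate point is the involution check in the previous paragraph, where one must verify that the canonical edge $e^\star$ is preserved under the toggle; everything else is bookkeeping. As a bonus, the argument makes transparent why $F_G$ is independent of the edge ordering: its coefficients are determined by $\chi_G$.
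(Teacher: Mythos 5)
Your proof is correct, and it takes a genuinely different route from the paper. The paper proves Whitney's identity by induction on the number of edges via deletion--contraction: letting $e$ be the \emph{smallest} edge, it observes that the BCF sets of $G$ containing $e$ biject (by deleting $e$) with the BCF sets of $G/e$, while the BCF sets avoiding $e$ are exactly the BCF sets of $G\setminus e$, and then invokes $\chi_G=\chi_{G\setminus e}-\chi_{G/e}$. Your argument instead works directly with the subset expansion $\chi_G(x)=\sum_{A\subseteq E}(-1)^{|A|}x^{\kappa(A)}$ and cancels the non-BCF terms via a sign-reversing, $\kappa$-preserving involution $A\mapsto A\triangle\{e^\star(A)\}$, where $e^\star(A)$ is the smallest edge that is the maximum of a cycle whose remaining edges lie in $A$. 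This is the classical involution proof; it is self-contained (no appeal to deletion--contraction), and, as you note, it makes the ordering-independence of $F_G$ immediate since its coefficients are read off from $\chi_G$. The paper's proof, by contrast, gives a recursive structure on $\mathcal{F}_{G}$ (the correspondence $\mathcal{F}_{G,e}\leftrightarrow\mathcal{F}_{G/e}$) that the authors re-use conceptually elsewhere. Your verification of the involution is essentially complete; the only step left implicit is that $e^\star(A)$ remains a witness in $\phi(A)$ (so $e^\star(\phi(A))\le e^\star(A)$), which is immediate since the witnessing broken circuit $C\setminus\{e^\star(A)\}$ avoids $e^\star(A)$ and hence lies in $\phi(A)$ as well.
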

We give a short proof of this theorem at the end of this section to make the paper self contained.

To make the connection to the polymer model approach explicit, we note that $F_G(x)$ is the partition function of a polymer model in which polymers are BCF trees in $G$; two polymers are compatible if their union in $G$ is disconnected; and the weight $w(T)$ of a polymer $T$ is $x^{|T|}$ where $|T|$ denotes the number of edges in $T$.

We note that by Whitney's theorem, showing that $\chi_G(x) \not = 0$ for $|x| \geq R$ is equivalent to showing $F_G(x) \not = 0$ for $|x| \leq 1/R$. Proving such a zero-free disc for $F_G$ will be the main goal of the paper. 
We note that our proof also implicitly gives a zero-free disc around $0$ for the forest generating function of $G$, as is also the case in~\cites{sokal,Borgs,Fernandez,JPSzeros}.
However, with a different proof we can obtain a significantly better bound which is the content of Theorem~\ref{thm:forest}. This is detailed in Section~\ref{sec:forests}. 

We conclude this section with a proof of Whitney's theorem. In Section~\ref{sec:prelim} we collect some combinatorial preliminaries. In Section~\ref{sec:main result} we prove our main results, Theorems~\ref{thm:main} and~\ref{thm:large girth}. In Section~\ref{sec:forests} we prove Theorem~\ref{thm:forest}. We end with some concluding remarks and directions of future research. 

\begin{proof}[Proof of Theorem~\ref{thm:whitney}]
Let $G$ be an $n$-vertex graph and choose any ordering of the edges of $G$.
Our goal is to prove that 
\begin{equation}\label{eq:induction claim forest}
\chi_G(x)=x^nF_G(-1/x).    
\end{equation}
%We first note that in case $G$ has more than one edge between two vertices $u$ and $v$ we can replace $G$ by the graph obtained from $G$ by deleting all but one these edges without changing $F_G(x)$. Indeed, only the largest between $u$ and $v$ can be in a broken-free circuit.
We now use induction on the number of edges to prove~\eqref{eq:induction claim forest}.
If $G$ has no edges, then $x^nF_G(-1/x)=x^n=\chi_G(x)$, showing the base case.
Next assume that $G$ has at least one edge and fix an ordering of the edges of $G$. Let $e$ be the smallest edge in the ordering. 
%We may assume that $e$ is not a loop since otherwise both $x^nF_G(-1/x)$ and $\chi_G(x)$ are equal to the zero polynomial since then $G$ does not have any BCF set (nor any proper colouring).
Next let $G/e$ denote the simple graph obtained by contracting the edge $e$ where if multiple edges arise we keep only the edge that is largest in the ordering. The ordering of the edges of $G/e$ is taken to be the one induced by the ordering on $G$. Let $\mathcal{F}_{G,e}$ denote the BCF sets of $G$ that contain the edge $e$. Note that since $e$ is the smallest edge of $G$, $\mathcal{F}_{G,e}$ corresponds bijectively with $\mathcal{F}_{G/e}$ via $F\mapsto F\setminus e$. Indeed, if $e$ is contained in a triangle only the largest of the other two edges in that triangle can be contained in a BCF set that contains $e$.

Therefore
\begin{align*}
x^nF_G(-1/x)&=\sum_{F \in \mathcal{F}_{G,e}}(-1)^{|F|}x^{n-|F|}+\sum_{F\in \mathcal{F}_{G\setminus e}}(-1)^{|F|}x^{n-|F|}
\\
&=-\sum_{F \in \mathcal{F}_{G,e}}(-1)^{|F|-1}x^{(n-1)-(|F|-1)}+\sum_{F\in \mathcal{F}_{G\setminus e}}(-1)^{|F|}x^{n-|F|}
\\
&=-\sum_{F \in \mathcal{F}_{G/e}}(-1)^{|F|}x^{(n-1)-|F|}+\sum_{F\in \mathcal{F}_{G\setminus e}}(-1)^{|F|}x^{n-|F|}
\\
&=- x^{n-1}F_{G/e}(-1/x)+x^nF_{G\setminus e}(-1/x)=-\chi_{G/e}(x)+\chi_{G\setminus e}(x)=\chi_G(x),
\end{align*}
where the penultimate equality follows by induction, since both $G/e$ and $G\setminus e$ have fewer edges than $G$ and the last equality is by the well-known deletion contraction formula for the chromatic polynomial.
This completes the proof.
\end{proof}

% From now on, we abbreviate ``broken circuit free'' by BCF.
% Let us write $\mathcal{F}_G$ for the set of all BCF sets in $G$. 
% We define the polynomial
% \[
% F_G(x) = \sum_{F \in \mathcal{F}_G}x^{|F|} = \sum_{i=1}^n a_{n-i}x^i,
% \]
% and note that 
% \[
% \chi_G(x) = (-x)^nF_G(-1/x)\, .
% \]

\section{Preliminaries}\label{sec:prelim}
In this section we collect some combinatorial preliminaries that will be used in the proof of Theorems~\ref{thm:main} and~\ref{thm:large girth}.
\subsection{Tree generating functions}
An acyclic connected graph is called a tree. For a tree $T$, we abuse notation by using $T$ to refer both to the graph and its edge set. 
For a graph $G=(V,E)$, a vertex $v\in V$, and a variable $x$, we define the \emph{rooted tree generating function}  by
\[
T_{G,v}(x):=\sum_{\substack{T\subseteq E(G) \text{ a tree, } \\ v\in V(T)}} x^{|T|}.
\]
The following lemma shows how to bound the rooted tree generating function; it appears somewhat implicitly in~\cite{JPSzeros}. 
The proof we give is taken from~\cite{PRsurvey} and we include it for completeness.
%As far as we know it has not appeared in the literature.
\begin{lemma}
\label{lem:tree}
Let $G=(V,E)$ be a graph of maximum degree at most $\Delta\geq 1$ and let $v\in V$.
Fix any $\alpha\geq 1$,
then 
\[
T_{G,v}\left(\tfrac{\ln \alpha}{\alpha \Delta}\right)\leq \alpha.
\]
\end{lemma}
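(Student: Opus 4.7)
The plan is to reduce the bound to a one-variable fixed-point equation on the infinite $\Delta$-regular rooted tree, and then extract the constant $\ln\alpha/(\alpha\Delta)$ from an elementary calculus inequality.

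First, I would pass from $G$ to its universal cover $\tilde G$ rooted at $v$. Every subtree $T\subseteq E(G)$ containing $v$ lifts uniquely (starting from the root) to a subtree $\tilde T\subseteq E(\tilde G)$ containing $\tilde v$ with $|\tilde T|=|T|$, and distinct $T$'s give distinct lifts since one can recover $T$ by projection. Hence
\[
T_{G,v}(x) \le T_{\tilde G, v}(x) \quad \text{for all } x \ge 0.
\]
In $\tilde G$ every vertex has degree at most $\Delta$, and a monotonicity argument (adding edges can only increase the number of rooted subtrees through $v$) gives
\[
T_{\tilde G, v}(x) \le T_{\mathbb T_\Delta, r}(x),
\]
where $\mathbb T_\Delta$ is the infinite $\Delta$-regular rooted tree with root $r$.

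Next, on $\mathbb T_\Delta$ subtrees containing $r$ decompose uniquely into their root-branches, so if $R(x)$ denotes the analogous generating function rooted at the root of the infinite $(\Delta-1)$-ary tree (the branch hanging at a child of $r$), then
\[
T_{\mathbb T_\Delta, r}(x) = \bigl(1 + xR(x)\bigr)^{\Delta}, \qquad R(x) = \bigl(1 + xR(x)\bigr)^{\Delta-1}.
\]
Setting $y := 1 + xR(x)$ yields the scalar equation $y = 1 + xy^{\Delta-1}$ and the identity $T_{\mathbb T_\Delta,r}(x) = y^\Delta$.

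Finally, it suffices to show $y^\Delta \le \alpha$ at $x=\ln\alpha/(\alpha\Delta)$. Since $y(x)$ arises as the smallest nonnegative fixed point of the increasing map $\phi_x(z) := 1 + xz^{\Delta-1}$ (reached by iteration from $z_0=1$), it is enough to check that $\phi_x(\alpha^{1/\Delta}) \le \alpha^{1/\Delta}$. Writing $\beta = \alpha^{1/\Delta}\ge 1$ and substituting, this reduces to $1 + \ln\beta/\beta \le \beta$, equivalently $\ln\beta \le \beta(\beta-1)$, which follows for $\beta\ge 1$ from the standard bound $\ln\beta \le \beta-1$. The main obstacle I anticipate is making the unfolding/monotonicity comparisons precise; once those are in place, the remaining fixed-point analysis and calculus are short.
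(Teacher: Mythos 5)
Your proof is correct, but it takes a genuinely different route from the paper's. The paper proves Lemma~\ref{lem:tree} by a direct induction on $|V|$: remove $v$, decompose a tree through $v$ by the set $S$ of branches at $v$, apply induction to each branch, and conclude $T_{G,v}(c)\leq \sum_{S\subseteq N_G(v)}(c\alpha)^{|S|}\leq (1+(\ln\alpha)/\Delta)^\Delta\leq \alpha$. You instead reduce to the extremal case of the infinite $\Delta$-regular tree $\mathbb{T}_\Delta$: lift to the universal cover to get $T_{G,v}\leq T_{\tilde G,\tilde v}$, embed the (tree) universal cover in $\mathbb{T}_\Delta$ to get $T_{\tilde G,\tilde v}\leq T_{\mathbb{T}_\Delta,r}$, and evaluate the latter via the fixed-point equation $y=1+xy^{\Delta-1}$. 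This is essentially Sokal's original argument, and the paper itself flags it as an alternative in the concluding remarks (``Sokal showed $T_{G,v}(x)\leq T_{\mathbb{T},r}(x)$\ldots This in turn implies Lemma~\ref{lem:tree}''). Your route buys the sharper claim that $\mathbb{T}_\Delta$ is extremal, at the cost of more machinery (universal covers, embedding infinite trees, and identifying the limit of the iteration with the generating function). The paper's induction is shorter and entirely self-contained, which is why it was chosen for the exposition. Your final calculus step is correct: the substitution reduces $\phi_x(\alpha^{1/\Delta})\leq\alpha^{1/\Delta}$ to $\ln\beta\leq\beta(\beta-1)$ for $\beta=\alpha^{1/\Delta}\geq 1$, which follows from $\ln\beta\leq\beta-1$. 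One detail worth making explicit when you invoke ``the smallest nonnegative fixed point reached by iteration from $z_0=1$'': the truncated (depth-at-most-$n$) generating functions satisfy $y_n=\phi_x(y_{n-1})$ with $y_0=1$, so the simple induction ``$y_{n-1}\leq\beta$ and $\phi_x$ increasing and $\phi_x(\beta)\leq\beta$ imply $y_n\leq\beta$'' bounds every partial sum at once; this establishes convergence of the series and the bound simultaneously, so you never need to assume a priori that a fixed point exists or that $x$ lies within the radius of convergence.
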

Note that for $\alpha \geq 1$, $\ln \alpha / (\alpha \Delta)$ takes values in the range $[0, (e\Delta)^{-1}]$  and so the lemma bounds $T_{G,v}(x)$ when $|x|\leq (e\Delta)^{-1}$. The example of the infinite $\Delta$-regular tree shows that it is not possible to give a finite uniform bound on $T_{G,v}(x)$ for $x$ outside this interval in general.

\begin{proof}
The proof is by induction on the number of vertices of $G$. If $|V| = 1$, the statement is clearly true.
Next assume that $|V|\geq 2$.
Given a tree $T$ such that $v\in V(T)$, let $S$ be the set of neighbours of $v$.
After removing $v$ from $T$, the tree decomposes into the disjoint union of $|S|$ trees, each containing a unique vertex from $S$.
Therefore, writing $c=\tfrac{\ln \alpha}{\alpha \Delta}$, we have
\[
T_{G,v}(c)\leq \sum_{S\subseteq N_G(v)} c^{|S|}\prod_{s\in S}T_{G-v,s}(c),
\]
which by induction is bounded by
\[
\sum_{S\subseteq N_G(V)} (c\alpha)^{|S|} \leq (1+(\ln \alpha)/\Delta)^\Delta\leq e^{\ln\alpha}=\alpha.
\]
This completes the proof.
\end{proof}

\begin{remark}
In the above lemma and in the proofs to follow we use bounds such as $(1+1/\Delta)^\Delta\leq e$ that are tight only in the large $\Delta$ limit; moreover at various places $\Delta$ can be replaced by $\Delta-1$ (and sometimes even $\Delta-2$).  
By being more careful, one could obtain an improved zero-free region in Theorem~\ref{thm:main} for small $\Delta$.
We use the simpler bounds for clarity of presentation and because these improvements do not affect the value of $K$ in the general statement of Theorem~\ref{thm:main}, which is our main concern.
\end{remark}

Next we prove a similar result for \emph{double-rooted trees}.
For a graph $G=(V,E)$, two distinct vertices $v_1, v_2\in V$, and a variable $x$, we define the \emph{double-rooted tree generating function} by
\[
T_{G,v_1, v_2}(x):=\sum_{\substack{T\subseteq E(G) \text{ a tree, } \\ v_1, v_2\in V(T)}} x^{|T|}.
\]
If $v_1$ is disconnected from $v_2$ in $G$, then we define $T_{G,v_1, v_2}(x)$ to be the zero polynomial in $x$.

\begin{lemma}
\label{lem:DR-tree}
Let $G=(V,E)$ be a graph of maximum degree at most $\Delta\geq 1$ and let $v_1, v_2\in V$ be distinct vertices.
Fix any $\alpha \in [1, e)$.
Then 
\[
T_{G,v_1, v_2}\left(\tfrac{\ln \alpha}{\alpha \Delta}\right)
\leq D(\alpha) := \tfrac{\alpha \ln \alpha}{\Delta (1 - \ln \alpha)}.
\]
Furthermore, if there is no path with $g$ or fewer vertices between $v_1$ and $v_2$ in $G$, then 
\[
T_{G,v_1, v_2}\left(\tfrac{\ln \alpha}{\alpha \Delta}\right) \leq (\ln \alpha)^{g-1}D(\alpha).
\]
% and if all paths between $v_1$ and $v_2$ in $G$ have even length, then
% \[
% T_{G,v_1, v_2}\left(\tfrac{\ln \alpha}{\alpha \Delta}\right)\leq B(\alpha):=\tfrac{\alpha \ln(\alpha)^2}{1-\ln(\alpha)^2}.
% \]
\end{lemma}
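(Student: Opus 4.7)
The plan is to reduce the double-rooted generating function to the single-rooted bound of Lemma~\ref{lem:tree} by isolating the unique $v_1$--$v_2$ path sitting inside any tree $T$ that contains both vertices. Write $c=\tfrac{\ln\alpha}{\alpha\Delta}$. For every such tree $T$, let $P(T)=u_0u_1\cdots u_k$ with $u_0=v_1$ and $u_k=v_2$ be the unique $v_1$--$v_2$ path inside $T$; removing the edges of $P(T)$ breaks $T$ into subtrees $T_0,\ldots,T_k$, where $T_i$ is the component containing $u_i$ and, by uniqueness of this path, meets $P(T)$ only at the vertex $u_i$. Conversely, any simple $v_1$--$v_2$ path $P=u_0\cdots u_k$ in $G$ together with, for each $i$, a tree rooted at $u_i$ in the subgraph $G_i:=G-(V(P)\setminus\{u_i\})$, recombines into a valid tree containing both $v_1$ and $v_2$. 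This bijection yields
\[
T_{G,v_1,v_2}(c)\;=\;\sum_{P}c^{|E(P)|}\prod_{i=0}^{|E(P)|}T_{G_i,u_i}(c),
\]
where $P$ ranges over simple paths from $v_1$ to $v_2$ in $G$.

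Since each $G_i$ has maximum degree at most $\Delta$, Lemma~\ref{lem:tree} bounds $T_{G_i,u_i}(c)$ by $\alpha$. Grouping the paths $P$ above by their edge-length $k$ and using the standard count that there are at most $\Delta^{k-1}$ simple paths of length $k$ from $v_1$ to $v_2$ in $G$ (at most $\Delta$ choices for the edge out of $v_1$, at most $\Delta-1$ for each subsequent internal edge, and the last edge is forced to terminate at $v_2$), this gives
\[
T_{G,v_1,v_2}(c)\;\leq\;\sum_{k\geq 1}\Delta^{k-1}c^k\alpha^{k+1}\;=\;\frac{\alpha}{\Delta}\sum_{k\geq 1}(c\alpha\Delta)^k\;=\;\frac{\alpha}{\Delta}\sum_{k\geq 1}(\ln\alpha)^k.
\]
Because $\alpha<e$ forces $\ln\alpha<1$, the geometric series sums to $\tfrac{\ln\alpha}{1-\ln\alpha}$, and the right-hand side equals $D(\alpha)$, proving the first inequality.

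For the second inequality, the hypothesis that no $v_1$--$v_2$ path has $g$ or fewer vertices means every such path has edge-length at least $g$, so only the terms with $k\geq g$ contribute to the sum. Running the same computation with the sum starting at $k=g$ produces $\tfrac{\alpha}{\Delta}\cdot\tfrac{(\ln\alpha)^g}{1-\ln\alpha}=(\ln\alpha)^{g-1}D(\alpha)$, as required. The only step with any real subtlety is the path--subtree decomposition, and in particular making sure the path-count factor is $\Delta^{k-1}$ rather than $\Delta^k$, which is exactly what makes the constants line up with the stated $D(\alpha)$; everything else reduces to a geometric series and a single invocation of Lemma~\ref{lem:tree}.
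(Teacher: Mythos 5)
Your proposal is essentially the same argument as the paper's: decompose a tree through $v_1$ and $v_2$ into the unique $v_1$--$v_2$ path plus subtrees hanging off the path vertices, bound each subtree factor by Lemma~\ref{lem:tree}, count paths of a given length by $\Delta^{k-1}$, and sum a geometric series (starting at $k\geq g$ for the girth version). The one slip is your claim that this decomposition is a bijection giving an \emph{equality}
\[
T_{G,v_1,v_2}(c)=\sum_{P}c^{|E(P)|}\prod_{i=0}^{|E(P)|}T_{G_i,u_i}(c).
\]
Removing the other path vertices to form $G_i$ does not make the recombination injective in the reverse direction: two independently chosen subtrees $T_i\subseteq G_i$ and $T_j\subseteq G_j$ can share a vertex $w\notin V(P)$, and then $P\cup\bigcup_i T_i$ contains a cycle and is not a tree. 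So the identity should be an inequality $T_{G,v_1,v_2}(c)\leq\sum_P c^{|E(P)|}\prod_i T_{G_i,u_i}(c)$, which is all your subsequent steps use and is exactly how the paper phrases it (the paper explicitly remarks that the recombination ``also creates objects that are not trees''). With that correction the argument is sound and matches the paper.
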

% \guus{Added a result that we can use for bipartite graphs}
% \viresh{
% 1) The old proof is commented out and the new more constructive proof below seems more insightful.
% 2) The bound we get for larger values of $\alpha$, e.g. $\alpha = e$ is worse than for the tree generating function, so there may be room for improvement. }

\begin{proof}
Note that every tree in $G$ that contains $v_1$ and $v_2$ can be obtained by first taking a path $P$ from $v_1$ to $v_2$ and then appending trees to each vertex on the path (although this also creates objects that are not trees since the trees we attach to $P$ may intersect $P$ elsewhere or intersect other appended trees). Let us write $p(i)$ for the number of paths from $v_1$ to $v_2$ with $i$ vertices and note that $p(i) \leq \Delta^{i-2}$ (in building the path we have at most $\Delta$ choices for each vertex except the first and last). 
%\viresh{Can improve to $\Delta (\Delta - 1)^{i-3}$ which could help for small $\Delta$.}.
Suppose that $T(x)$ is a uniform upper bound for $T_{G,v}(x)$ over $v \in V$ such that $\Delta xT(x)< 1$.  Then, 
\[
T_{G,v_1, v_2}(x) \leq \sum_{i \geq 2} p(i)x^{i-1} T(x)^{i} \leq \sum_{i \geq 2} \Delta^{i-2} x^{i-1} T(x)^{i} = xT(x)^2(1 - \Delta x T(x))^{-1}.
\]
From Lemma~\ref{lem:tree}, we can take $T\left(\tfrac{\ln \alpha}{\alpha \Delta}\right) = \alpha$, and so we see that 
\[
T_{G,v_1, v_2}\left(\tfrac{\ln \alpha}{\alpha \Delta}\right) \leq \frac{\alpha \ln \alpha}{\Delta} (1 - \ln \alpha)^{-1} = D(\alpha).
\]

If there is no path with $g$ or fewer vertices from $v_1$ to $v_2$ in $G$ then 
\[
T_{G,v_1, v_2}(x) \leq \sum_{i \geq g+1} p(i)x^{i-1} T(x)^{i} \leq \sum_{i \geq g+1} \Delta^{i-2} x^{i-1} T(x)^{i} = (\Delta x T(x))^{g-1} \sum_{i \geq 2} \Delta^{i-2} x^{i-1} T(x)^{i} 
\]
so that
\[
T_{G,v_1, v_2}\left(\tfrac{\ln \alpha}{\alpha \Delta}\right) \leq (\ln \alpha)^{g-1}D(\alpha).
\]
% Finally, if all paths between $v_1$ and $v_2$ have even length, then as above we have,
% \[
% T_{G,v_1, v_2}(x) \leq \sum_{i \geq 1} p(2i+1)x^{2i} T(x)^{2i+1}=\Delta x^2T(x)^3\sum_{i\geq 0}(\Delta x T(x))^{2i}.
% \]
% Therefore,
% \[
% T_{G,v_1, v_2}\left(\tfrac{\ln \alpha}{\alpha \Delta}\right)\leq \tfrac{\alpha \ln(\alpha)^2}{\Delta(1-(\ln \alpha)^2}.
% \]
%\viresh{Here $p(i)$ is probably maximised by $K_{\Delta + 1}$, while $T_{v,G}()$ is probably maximised by high girth graphs, so we might hope to play these off against each other to gain an improvement, but might be hard to formalise?}
\end{proof}

We also need the following counting lemma.
\begin{lemma}\label{lem:derivcount}
Let $X = \{1, \ldots, k\}$ and let $x$ be a variable. Then
\[ 
\sum_{S \subseteq X} \sum_{\substack{(s,t)\in S\times X \\ t>s}} x^{|S|}
= \sum_{S \subseteq X}x^{|S|} \sum_{i \in S}(k-i) = \binom{k}{2}x(1+x)^{k-1}.
\]
\end{lemma}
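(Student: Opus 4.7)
The plan is to verify the two equalities separately, since both are routine manipulations of finite sums, and the main content is a swap of summation order followed by a standard generating-function identity.

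For the first equality, I would simply unpack the inner sum. For a fixed $S \subseteq X$, the pairs $(s,t) \in S \times X$ with $t > s$ are in bijection with pairs $(s, t)$ where $s \in S$ and $t \in \{s+1, \dots, k\}$. So the number of such pairs is $\sum_{s \in S}(k-s)$, and since this count does not depend on $x$, we may factor $x^{|S|}$ through to obtain $\sum_{S \subseteq X} x^{|S|} \sum_{i \in S}(k-i)$. This handles the first equality with no calculation.

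For the second equality I would swap the order of summation to group by the element $i$ rather than by the set $S$:
\[
\sum_{S \subseteq X} x^{|S|} \sum_{i \in S}(k-i) = \sum_{i=1}^{k} (k-i) \sum_{\substack{S \subseteq X \\ i \in S}} x^{|S|}.
\]
The inner sum is a standard generating-function evaluation: subsets of $X$ containing $i$ correspond to arbitrary subsets of $X \setminus \{i\}$ (of size $k-1$) with the element $i$ adjoined, contributing a factor of $x$. Hence $\sum_{S \ni i} x^{|S|} = x(1+x)^{k-1}$, which is independent of $i$ and can be pulled outside. It then remains to compute $\sum_{i=1}^{k}(k-i) = \sum_{j=0}^{k-1} j = \binom{k}{2}$, giving the claimed right-hand side.

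There is no real obstacle here; the only point requiring any attention is keeping track of the bijection between subsets of $X$ containing $i$ and subsets of $X\setminus\{i\}$, so that the generating-function rewrite $\sum_{S\ni i} x^{|S|}=x(1+x)^{k-1}$ is justified. Everything else is bookkeeping, and the proof can be written in a few lines.
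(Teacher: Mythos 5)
Your proposal is correct and hinges on the same key move as the paper's proof: swap the double sum over $(S,i)$ so that $i$ is fixed first and the set $S$ ranges over subsets containing $i$. The only difference is cosmetic — the paper first groups by $|S|=r$ and then invokes the derivative identity $x\frac{d}{dx}(1+x)^k = \sum_r r\binom{k}{r}x^r$, whereas you evaluate $\sum_{S\ni i}x^{|S|}=x(1+x)^{k-1}$ directly, which slightly streamlines the bookkeeping.
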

\begin{proof}
For each $r=0, \ldots, k$, the coefficient of $x^r$ is given by 
\[
\sum_{|S|=r} \sum_{i \in S}(k-i) 
= \sum_{i=1}^k \sum_{\substack{S: \; i \in S \\ |S|=r }}(k-i)
=\sum_{i=1}^k \binom{k-1}{r-1}(k-i)
= \binom{k-1}{r-1}\binom{k}{2} = \frac{1}{2}r(k-1)\binom{k}{r}.
\]
So
\[
\sum_{S \subseteq X}x^{|S|} \sum_{i \in S}(k-i) = \sum_{r=0}^k \frac{1}{2}(k-1)r\binom{k}{r}x^r = \frac{1}{2}(k-1)x \frac{d}{dx}(1+x)^k = \binom{k}{2}x(1+x)^{k-1}.
\]
\end{proof}

\subsection{Broken-circuit-free sets}\label{sec:BCFPrelim}
%We collect some useful properties of the chromatic polynomial. For a graph $G$, write $\chi_G(x)$ for the chromatic polynomial of $G$. If $G$ has $n$ vertices, then $\chi_G$ is an alternating polynomial of degree $n$ with leading coefficient equal to $1$.

In this section, we establish some notation and simple properties related to BCF sets that will be used in the proof of our main result.

As before, let $G=(V,E)$ be a graph and recall that $\mathcal{F}_G$ is the set of all BCF sets of $G$. For a set of vertices $S \subseteq V$, we write $\mathcal{F}_{G,S}$ for those $F \in \mathcal{F}_G$ such that every non-trivial component of $(V,F)$ (i.e.\ a component that is not an isolated vertex) contains at least one vertex of $S$. We note that $\mathcal{F}_{G,S}$ contains the empty set of edges. So, for example, if $S$ is just a single vertex $u$, then $\mathcal{F}_{G,u}$ is the set of (edge sets of) BCF trees that contain $u$ along with the empty set of edges. 
We write $\mathcal{F}^*_{G,S}$ for those $F \in \mathcal{F}_{G}$ such that each non-trivial component of $F$ contains exactly one vertex of $S$ (so $\mathcal{F}^*_{G,S} \subseteq \mathcal{F}_{G,S}$).
For two distinct vertices $s,t \in V$, we write $\mathcal{F}_{G,s,t}$ for those $F \in \mathcal{F}_G$ that have exactly one non-trivial component and that component contains $s$ and $t$, i.e. $\mathcal{F}_{G,s,t}$ is the set of (edge sets of) BCF trees that contain $s$ and $t$.
For any element $F \in \mathcal{F}_G$, we write $V(F)$ to mean the set of non-isolated vertices of the graph $(V,F)$. 

The following inequality will be useful. For any positive number $y$,
\begin{equation}
\label{eq:Forest-to-tree}
    \sum_{F \in \mathcal{F}_{G,S}}y^{|F|} \leq \prod_{s \in S} \sum_{T \in \mathcal{F}_{G,s}}y^{|T|} \leq \prod_{s \in S}T_{G,s}(y),
\end{equation}
which holds because $\mathcal{F}_{G,S} \subseteq \{\cup_{s \in S}T_s: T_s \in \mathcal{F}_{G,s}\}$.

For $U \subseteq V$ we will write $\mathcal{F}_{U}$ instead of $\mathcal{F}_{G[U]}$ and extend this notation in the natural way to other situations. As usual $G[U]$ denotes the induced subgraph of $G$ on vertex set $U$.

The following proposition captures the crucial property of BCF sets that we make use of in our main results. 

\begin{proposition}
\label{pr:BFStree}
Let $G=(V,E)$ be a graph, let $u \in V$, $S\subseteq N(u)$ and set $V' = V-u$. Suppose $E$ has an ordering such that the edges incident to $u$ are largest in the order. Given a forest $F \in \mathcal{F}^*_{V', S}$, we have that $T = F \cup \{us: s \in S\}$ is a tree and $T$ contains a broken circuit if and only if there is some $s \in S$ such that the non-trivial component of $F$ containing $s$ also contains some $t \in N(u)$ with $us < ut$ in the ordering of $E$ .
\end{proposition}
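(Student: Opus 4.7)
The plan is to argue first that $T$ really is a tree, and then to analyze each edge of $E\setminus T$ that closes a cycle in $T$ and decide when it is the largest edge of that cycle.

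For the first part, I would note that the non-trivial components of $F$ are vertex-disjoint trees in $G[V']$, each containing exactly one vertex of $S$, together with (possibly) some isolated vertices of $S$. Since $u \notin V'$ it is isolated in $F$, so attaching the star $\{us : s \in S\}$ can create no cycle: each edge $us$ joins $u$ to a different component (of $F$ together with isolated vertices), by the assumption $F \in \mathcal{F}^*_{V',S}$. The resulting edge set is acyclic and, viewing $u$ as a central hub, clearly connects $\{u\} \cup S \cup V(F)$, so it is a tree.

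For the broken-circuit part, the key point is that $T$ contains a broken circuit iff there exists $e \in E\setminus T$ such that $T\cup\{e\}$ contains a (unique) cycle $C_e$ and $e$ is the largest edge of $C_e$. I would split on whether $e$ is incident to $u$. If $e$ is not incident to $u$, then either $C_e$ avoids $u$ — in which case $C_e \subseteq F \cup\{e\}$ is a cycle in $G[V']$, and since $F$ is BCF in $G[V']$ (with the induced order), $e$ is not the largest edge of $C_e$ — or $C_e$ passes through $u$ and hence uses at least one $u$-incident edge of $T$, which is larger than $e$ by hypothesis on the ordering. Either way, such an $e$ produces no broken circuit.

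Now consider $e = ut$ with $t \in N(u)\setminus S$. Since $u$'s only $T$-neighbours are the vertices of $S$, the component of $u$ in $T$ is $\{u\} \cup S \cup \bigcup_{s \in S}(\text{component of $s$ in $F$})$, so $T\cup\{ut\}$ contains a cycle iff $t$ lies in the non-trivial component of $F$ containing some (necessarily unique) $s \in S$. When this occurs, $C_{ut} = \{ut, us\} \cup P$, where $P$ is the $s$-$t$ path in $F$; all edges of $P$ are non-$u$-incident and hence smaller than both $us$ and $ut$, so $ut$ is the largest edge of $C_{ut}$ iff $us < ut$. Combining the two cases yields the biconditional in the proposition; the condition $t \in N(u)\setminus S$ can be relaxed to $t \in N(u)$ because the hypothesis $F\in\mathcal{F}^*_{V',S}$ forces $t \notin S$ (else its component would contain both $s$ and $t$ in $S$), and the case $t=s$ is ruled out by $us<ut$.

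The only place where real thought is required is Case (ii); the rest is accounting. The essential structural ingredient that makes the case analysis collapse is the assumption that the edges incident to $u$ are largest in the ordering, which automatically rules out every broken circuit except those created by adding a single edge $ut$ parallel through $u$ to an existing path in $F$.
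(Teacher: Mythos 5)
Your proof is correct and follows essentially the same route as the paper: establish that $T$ is a tree, then classify the edges $e \in E \setminus T$ that close a cycle in $T$ and determine when $e$ is the largest edge, using the BCF property of $F$ in $G[V']$ and the maximality of $u$-incident edges to eliminate all cases except $e = ut$ with $t$ in a component of $F$ meeting $S$. The only cosmetic difference is that you case-split on whether $e$ is incident to $u$, whereas the paper case-splits on whether both endpoints of $e$ lie in $V(F)$ — these are equivalent given that both endpoints lie in $V(T)$.
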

\begin{proof}
The definition of $ \mathcal{F}^*_{V', S}$ implies that $T$ is a tree with $V(T) = V(F) \cup \{u\}$. If the non-trivial component of $F$ containing some $s \in S$ also contains $t \in N(u)$ with $us < ut$, then there is a path in $T$ from $u$ to $t$ and this path is a broken circuit. Conversely, if $T$ contains a broken circuit, let $e = ab \in E \setminus T$ be such that $T \cup e$ contains a cycle $C$ in which $e$ is the largest edge. We must have $a, b \in V(T)$. Suppose $a, b \in V(F)$.
We cannot have that $a$ and $b$ belong to the same non-trivial component of $F$ since $F$ is BCF. We also cannot have $a$ and $b$ in different non-trivial components of $F$ since the path from $a$ to $b$ in $T$ then contains $u$ so that $e=ab$ cannot be the highest edge in the cycle $C$. So it must be the case that $u$ is one of the endpoints of $e$, i.e. $e = ut$ with $t \in V(F)$. Let $s$ be the unique vertex of $S$ in the same component as $t$ (note that $s \not= t$ since $e \not\in T$). Then the broken circuit in $T$ must be the path $P$ in $T$ from $u$ to $t$ and $C = P \cup ut$. We know the first edge of $P$ is $us$, so we must have $us < ut$.
\end{proof}

\section{Proof of the main results}\label{sec:main result}
In this section we prove our main results Theorems~\ref{thm:main} and~\ref{thm:large girth}. The results will follow from the following theorem which establishes a zero-free disc for $F_G$ (defined at~\eqref{eq:FGDef}) whose radius can be estimated via a constrained optimization problem.

\begin{theorem}\label{thm:fgbd}
Fix $g \in \mathbb{N}_{\geq 3}$ (which will represent girth).
For constants $a \in [0,1)$ and $b\in [1,e)$, define $h(a,b) := \tfrac{(1-a) \ln b}{b}$ and
\begin{align*}
f_g(a,b):&= \exp\left(\frac{(1-a)\ln b}{b}\right) - 1 + \frac{b\cdot(\ln b)^{g-1}}{2(1 - \ln b)}.
% \\
% f_{\text{bip}}(a,b):&= \exp\left(\frac{(1-a)\ln b}{b}\right) - 1 + \frac{b\cdot(\ln b)^{3}}{2(1 - (\ln b)^2)}.
\end{align*}
For any choice of $a$ and $b$ satisfying $f_g(a,b) \leq a$ %(respectively, $f_{\text{bip}}(a,b) \leq a$) 
the following holds. 
For any graph $G$ of girth at least $g$ and maximum degree at most $\Delta \geq 1$ %(respectively, for any bipartite graph $G$ of maximum degree at most $\Delta$) 
and any $x \in \mathbb{C}$ satisfying $|x| \leq h(a,b)/\Delta$, we have that
$F_G(x) \not= 0$ and so  $\chi_G(x) \not = 0$ for any $x \in \mathbb{C}$ satisfying $|x| \geq \Delta/h(a,b)$.
\end{theorem}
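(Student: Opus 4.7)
The plan is to prove $F_G(x)\neq 0$ in the disc $\{|x|\leq h(a,b)/\Delta\}$ by induction on $|V(G)|$, with the strengthened inductive hypothesis that for every graph $H$ of girth at least $g$ and maximum degree at most $\Delta$ with strictly fewer vertices than $G$ and every $v\in V(H)$,
\[
\left|\frac{F_H(x)}{F_{H-v}(x)}-1\right|\;\leq\;a \qquad \text{whenever } |x|\leq h(a,b)/\Delta.
\]
The base case $|V(H)|\leq 1$ is immediate, and the hypothesis gives $|F_H/F_{H-v}|\geq 1-a>0$, so non-vanishing of $F_H$ follows by telescoping down to the empty graph. The zero-free disc for $F_G$ then transfers via Whitney's theorem $\chi_G(x)=x^n F_G(-1/x)$ to the claimed exterior $\{|x|\geq \Delta/h(a,b)\}$ for $\chi_G$.

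For the inductive step, fix $u\in V(G)$ and order the edges of $G$ so the ones incident to $u$ are largest, putting us in the setting of Proposition~\ref{pr:BFStree}. Each BCF set of $G$ decomposes uniquely into its component $T$ containing $u$ (a BCF tree containing $u$, possibly the trivial tree $\{u\}$) together with a BCF set of $G-V(T)$; one checks that a subset of $E(G)$ disjoint from $V(T)$ is BCF in $G$ if and only if it is BCF in $G-V(T)$ with the induced ordering. This gives
\[
F_G(x) = \sum_{T\ni u,\, T\text{ BCF tree in }G} x^{|T|}\,F_{G-V(T)}(x),
\]
with the trivial tree contributing the leading term $F_{G-u}(x)$. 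Dividing by $F_{G-u}(x)$ and bounding each $F_{G-V(T)}/F_{G-u}$ as a telescoping product of $|T|$ single-vertex-removal ratios (controlled in modulus by the inductive hypothesis), the task reduces to bounding the resulting sum over non-trivial BCF trees through $u$ by $f_g(a,b)$.

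To do that, I would parametrise BCF trees through $u$ via Proposition~\ref{pr:BFStree}: each is specified by a non-empty $S\subseteq N(u)$ together with a vertex-disjoint family of ``safe'' subtrees $(T_s)_{s\in S}$ in $G-u$, where safe means $V(T_s)$ contains no $t\in N(u)$ with $ut>us$. Dropping the vertex-disjointness and safety constraints produces the crude upper bound $\prod_{s\in N(u)}(1+\text{subtree weight})$, and Lemma~\ref{lem:tree} applied with $\alpha=b$ collapses this product via the inequality $(1+\ln b/\Delta)^\Delta\leq e^{\ln b}$ to produce the main term $\exp(h(a,b))-1$ of $f_g(a,b)$. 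The overcount from dropping safety is corrected by a union bound over the $\binom{\deg u}{2}$ ordered pairs $(s,t)\in N(u)^2$ with $us<ut$: an unsafe configuration requires a BCF tree in $G-u$ through both $s$ and $t$, and the girth hypothesis forces any $s$-$t$ path in $G-u$ to have at least $g-1$ vertices, since together with the edges $us$ and $ut$ it would otherwise close a cycle of length less than $g$. Lemma~\ref{lem:DR-tree}, applied with this path-length guarantee, then bounds each pairwise contribution and after summing over the $\binom{\deg u}{2}$ pairs yields the second term $\tfrac{b(\ln b)^{g-1}}{2(1-\ln b)}$ of $f_g(a,b)$. Combining gives $|F_G/F_{G-u}-1|\leq f_g(a,b)\leq a$, closing the induction.

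The main obstacle is the delicate bookkeeping that stitches the two estimates together. The accumulation factor $(1-a)^{-|T|}$ coming from the iterated ratios $F_{G-V(T)}/F_{G-u}$ compounds as $|T|$ grows, and it must be aligned with the application of Lemma~\ref{lem:tree} so that the product over $N(u)$ collapses to exactly $\exp(h(a,b))$ rather than a looser bound such as $b$ or $b^{1-a}$. Analogous care is needed in the safety correction to ensure that Lemma~\ref{lem:DR-tree} is used at precisely the girth-dependent strength supported by the hypothesis, so that the exponent $g-1$ and the constant $b/(2(1-\ln b))$ in the second term of $f_g(a,b)$ emerge correctly.
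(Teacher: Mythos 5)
Your overall framework is close to the paper's---same induction, same ratio $F_G/F_{G-u}$, same telescoping through $(1-a)^{-1}$, same decomposition of BCF trees through $u$ via a choice of $S\subseteq N(u)$---but the step that produces the main term $\exp(h(a,b))-1$ is wrong, and the ``subtract the overcount'' logic that is supposed to rescue it is not valid. If you literally drop both vertex-disjointness and safety, what you get (after absorbing the telescoping factor into $y=|x|/(1-a)$) is
\[
\sum_{\emptyset\neq S\subseteq N(u)} y^{|S|} \prod_{s\in S} T_{G-u,s}(y)
\;\leq\; \sum_{\emptyset\neq S\subseteq N(u)} (yb)^{|S|}
\;\leq\; \bigl(1+\tfrac{\ln b}{\Delta}\bigr)^{\Delta}-1 \;\leq\; b-1,
\]
which is the very inequality you invoke---but it yields $b-1$, not $\exp(h(a,b))-1$. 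These do not match (for $g=3$ with $a\approx 0.33$, $b\approx 1.43$, one has $b-1\approx 0.43>a$ while $\exp(h(a,b))-1\approx 0.18$), so the crude bound is already too large to close the induction. And you cannot fix this by ``correcting the overcount with a union bound'': subtracting an \emph{upper} bound on the excess from an \emph{upper} bound on the relaxed sum does not give an upper bound on the constrained sum. The direction of the inequality is wrong.

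The missing idea is the comparison-to-$1$ identity that the paper builds from the recursion~\eqref{eq:recurF}: for each fixed nonempty $S\subseteq N(u)$, the full sum over $F\in\mathcal{F}_{V',S}$ of $x^{|F|}F_{V'-S-V(F)}/F_{V'}$ is \emph{exactly} $1$, not merely approximately bounded by a product of tree generating functions. The actual inner sum---over $F\in\mathcal{F}^*_{V',S}$ with $F\cup uS$ BCF---then differs from this $1$ only by the contributions from $\mathcal{A}$ (a component grabbing two vertices of $S$) and $\mathcal{B}$ (a BCF violation), both of which force a doubly-rooted tree between two neighbours of $u$ and hence are small by Lemma~\ref{lem:DR-tree} and the girth hypothesis. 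With the inner sum pinned near $1$, the outer factor $\sum_S |x|^{|S|}=(1+|x|)^\Delta-1$ supplies the main term $\exp(h(a,b))-1$, and the $\mathcal{A}\cup\mathcal{B}$ error, summed over $S$ via Lemma~\ref{lem:derivcount}, supplies the second term of $f_g(a,b)$. Without that exact identity your main term inflates from $\exp(h(a,b))-1$ to $b-1$ and the argument fails.
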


For some fixed values of $g$, the following table gives values of $a$ and $b$ for which the above inequalities hold. The first line of the table together with Theorem~\ref{thm:fgbd} proves Theorem~\ref{thm:main}.
\begin{figure}[h]
\begin{tabular}{ |p{3cm}||p{3cm}|p{3cm}|p{3cm}|  }
 %\hline
%\multicolumn{4}{|c|}{Values for $a$ and $b$ and $g$ for which $f_g(a,b)<a$ and corresponding values for $h(a,b)^{-1}$:} \\
%\hline
 \hline
 $g$& $a$ & $b$ & $h(a,b)^{-1}$\\
 \hline
3 & 0.329963 &1.434786  & 5.93148\\
4 &0.307493  & 1.521606 & 5.23445\\
5 &0.298048  & 1.593365 & 4.87264\\%\textcolor{red}{4}\\
6 &0.293619  & 1.654298 & 4.65234\\
%\textcolor{red}{6} &0.293619  & 1.654298 & 4.65246\\
7 &0.291435  & 1.707032 & 4.50511\\
8 &0.290359  & 1.753334 & 4.40009\\
%\textcolor{red}{8} &0.290359  & 1.753334 & 4.40010\\
9 &0.289864  & 1.794457 & 4.32173\\
10 &0.289689  & 1.831325 & 4.26121\\
15 &0.290356  & 1.971723 & 4.09258\\
%\textcolor{red}{15} &0.290356  & 1.971723 & 4.09257\\
20 &0.291349  & 2.06735 & 4.01683\\
25 &0.292139  & 2.13778 & 3.97497\\
100 &0.29505  & 2.4704 & 3.87487\\
$g \to \infty$ & 0.295741...  & 2.71828... & 3.85977...\\
 %\hline
% \hline
 %\multicolumn{4}{|c|}{Values for $a$ and $b$ and $g$ for which $f_{\text{bip}}(a,b)<a$ and corresponding value for $h(a,b)^{-1}$:} \\
 %\hline
% & $a$ & $b$ & $h(a,b)^{-1}$\\
%\hline
%  & 0.313338 &1.57687  & 5.04251\\
 \hline
\end{tabular}
\caption{Values for $a$ and $b$ and $g$ for which $f_g(a,b)<a$ and corresponding values for $h(a,b)^{-1}$. The rightmost column gives concrete bounds on the value of $K_g$ in the statement of Theorem~\ref{thm:large girth}. For example, $K_5\leq 4.87264$ and $K_{25}\leq 3.97497$.}
\end{figure}\label{tab:bounds}

%\viresh{At the moment this gives a bound of $5.95 \Delta$ when the girth is $3$. It goes to around $5 \Delta$ when girth is $5$ and down to $4 \Delta$ for girth $22$. }
\begin{proof}[Proof of Theorem~\ref{thm:fgbd}]
First we establish some notation and a few facts we will need. 
%The graph $G=(V,E)$ as given in the statement is fixed throughout with some ordering on its edges. 
For any graph $G=(V,E)$ and a set of vertices $U \subseteq V$, recall that we write $F_U(x)$   to mean $F_{G[U]}(x)$. 

The proof is based on applying induction by using the following recursion. Given any graph $G=(V,E)$ (with some ordering of the edges of $G$ so that the notion of BCF is well defined) and any $S \subseteq V$, we have
\begin{equation}
\label{eq:recurF}
F_V(x) = \sum_{F \in \mathcal{F}_{G,S}} x^{|F|} F_{V- S - V(F)}(x)
\end{equation}
since any $F \in \mathcal{F}_G$ can be uniquely determined by first identifying those components of $F$ that contain vertices in $S$ and then identifying the remainder of the forest and noting that $F$ is BCF if and only if every component of $F$ is BCF. 
%We remark that $ \mathcal{F}_{G,S}$ contains the empty tree. 
Now, specialising~\eqref{eq:recurF} to the case when $S$ is a single vertex $u\in V$, and separating the contribution to the sum from $F=\emptyset$, we see that
\begin{equation}
\label{eq:recur}
F_V(x) = F_{V-u}(x) + \sum_{\substack{T \in \mathcal{F}_{G,u} \\ |T| \geq 1 }}x^{|T|}F_{V-u -V(T)}(x)\, .
\end{equation}
Dividing through by $F_{V-u}(x)$, we define the rational function $R(x)$ by 
\begin{equation}
\label{eq:R}
R = R(x):= \frac{F_V(x)}{F_{V-u}(x)} - 1 = \sum_{\substack{T \in \mathcal{F}_{G,u} \\ |T| \geq 1 }}x^{|T|}\frac{F_{V-u-V(T)}(x)}{F_{V-u}(x)}\, .
\end{equation}

Note that the sums in all three identities above depend on how we order the edges of $G$, but that all the expressions are true for any choice of ordering. 
The following claim immediately implies the theorem, since $F_V(x)=F_G(x)$.

\begin{claim}
Assume $a \in [0,1)$ and $b\in [1,e)$ satisfy $f_g(a,b) \leq a$ %(respectively, $f_{\text{bip}}(a,b) \leq a$) 
as in the statement of Theorem~\ref{thm:fgbd}.
    For any graph $G=(V,E)$ of girth at least $g$ and maximum degree at most $\Delta$, %(respectively, for any bipartite graph $G=(V,E)$ of maximum degree at most $\Delta$), 
    any vertex $u \in V$ and any complex number $x$ satisfying $|x| \leq h(a,b) / \Delta$,
    we have 
    \begin{itemize}
        \item[(i)] $F_{V}(x)\neq 0$,
        \item[(ii)] $|R(x)| = |F_V(x)/F_{V-u}(x) - 1| \leq a$.
    \end{itemize}
\end{claim}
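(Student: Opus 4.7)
The plan is to argue by strong induction on $|V|$, with base case $|V|=1$ being trivial since $F_V=1$ and $R=0$. For the inductive step, first observe that (ii) implies (i): if $|R(x)|\leq a<1$, then $|1+R(x)|\geq 1-a>0$, and combined with $F_{V-u}(x)\neq 0$ from the inductive hypothesis applied to $G-u$, this gives $F_V(x)=F_{V-u}(x)(1+R(x))\neq 0$. So the work concentrates on establishing (ii).

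To prove (ii), fix an edge ordering on $E(G)$ in which the edges incident to $u$ are the largest, so that Proposition~\ref{pr:BFStree} applies. From the expansion \eqref{eq:R}, the task is to bound $|R(x)|$ term by term. For each BCF tree $T\ni u$ with $|T|\geq 1$, bound the ratio $|F_{V-u-V(T)}/F_{V-u}|$ by telescoping over the $|T|$ vertices of $V(T)\setminus\{u\}$ from $V-u$, applying the inductive version of (ii) at each step; this yields $\leq (1-a)^{-|T|}$ and hence $|R(x)|\leq\sum_T y^{|T|}$ with $y=|x|/(1-a)\leq (\ln b)/(b\Delta)$.

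The central task is then to bound this BCF-tree sum by $f_g(a,b)$. Using Proposition~\ref{pr:BFStree}, decompose each such $T$ into the star $\{us:s\in S(T)\}$ at $u$ together with vertex-disjoint BCF subtrees $T'_s$ of $T-u$ rooted at each $s\in S(T)\subseteq N(u)$; the BCF condition translates to $V(T'_{s_i})\cap\{s_j:j>i\}=\emptyset$ in the chosen edge ordering. Split the sum by whether any $T'_s$ reaches another neighbor of $u$. In the \emph{star-like} case (each $T'_s$ stays clear of $N(u)\setminus\{s\}$), the BCF constraint is automatic, and the contribution should be controlled using Lemma~\ref{lem:tree} per subtree combined with a product-over-neighbors estimate, targeting the first term $e^{(1-a)\ln b/b}-1$ of $f_g$. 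In the \emph{crossing} case (some $T'_{s_i}$ contains $s_j\in N(u)$ with $j<i$), the girth hypothesis forces any path between $s_i$ and $s_j$ in $G-u$ to have at least $g-1$ vertices; Lemma~\ref{lem:DR-tree} then bounds the corresponding double-rooted tree generating function by $(\ln b)^{g-2}D(b)$, and summing over the at most $\binom{\Delta}{2}$ neighbor-pairs via Lemma~\ref{lem:derivcount} recovers the second term $\frac{b(\ln b)^{g-1}}{2(1-\ln b)}$.

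The main obstacle will be executing the star-like case tightly enough. The naive argument yields a per-neighbor factor of $(1+y)$, which is larger than the required $(1+|x|)$; to close the gap one must go beyond the term-by-term ratio bound. A plausible route is to apply the factorization identity \eqref{eq:recurF} to $F_{V-u}$ with $S_0=S(T)$ and exploit cancellations in the sum over $S$ to absorb one factor of $(1-a)$ per star edge, rather than estimating each term in isolation.
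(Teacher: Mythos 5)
Your overall framework --- induction on $|V|$, the observation that (ii) implies (i), and the telescoping bound $|F_{V'-A}/F_{V'}|\leq(1-a)^{-|A|}$ --- matches the paper. The gap lies in your treatment of $R$.

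The step
$|R(x)|\leq\sum_{T}y^{|T|}$
with $y=|x|/(1-a)$ is already fatal: summing $y^{|T|}$ over all nonempty BCF trees containing $u$ gives at least $T_{G,u}(y)-1\leq b-1$ by Lemma~\ref{lem:tree}, and at the optimal pair $(a,b)\approx(0.33,1.43)$ one has $b-1\approx 0.43>a$. You recognize this yourself when you note the per-neighbor factor $(1+y)$ is too large. But the fix you propose --- invoking the factorization identity \eqref{eq:recurF} ``to exploit cancellations'' --- cannot be grafted onto that step: once you have applied the triangle inequality to each $T$ separately, all cancellation is destroyed. What is needed is to restructure the argument \emph{before} any absolute value is taken. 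Concretely, one first regroups the sum defining $R$ in \eqref{eq:R} by $S=V(T)\cap N(u)$, obtaining \eqref{eq:R2}, and then for each fixed $S$ the inner sum over BCF extensions is written as $1$ minus a sum over the ``defective'' forests $\mathcal{A}\cup\mathcal{B}$ (those with two roots in a common component, or whose extension by $u$ contains a broken circuit), using the identity $\sum_{F\in\mathcal{F}_{V',S}}x^{|F|}F_{V'-S-V(F)}/F_{V'}=1$, which is exactly \eqref{eq:recurF} divided through. Only the correction terms are then hit with the triangle inequality, telescoping, and the (double-)rooted tree lemmas, and the star contribution reduces to $\sum_{S\neq\emptyset}|x|^{|S|}\leq(1+|x|)^{\Delta}-1$, with the crucial per-neighbor factor $(1+|x|)$ rather than $(1+y)$.

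Two smaller points. Your ``crossing'' case should invoke Lemma~\ref{lem:DR-tree} at the girth parameter $g-2$, giving the factor $(\ln b)^{g-3}D(b)$, not $(\ln b)^{g-2}D(b)$: a path between two neighbors of $u$ inside $G-u$ with $k$ vertices closes a cycle of length $k+1$, so girth $\geq g$ forces $k\geq g-1$, i.e.\ no path with $g-2$ or fewer vertices. And note that the paper's correction set $\mathcal{A}$ (two roots in one component) has no analogue in your partition into star-like and crossing BCF \emph{trees}, because it arises precisely from the comparison with the full forest family $\mathcal{F}_{V',S}$; your decomposition, which stays inside the BCF trees and never introduces $\mathcal{F}_{V',S}$, has no clean way to access the identity that makes the inner sum equal $1$.
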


We now prove the claim by induction on the number of vertices in $G=(V,E)$. 
If $|V|=2$ then $F_V(x)$ is either $1$ or $(1+x)$ whereas $F_{V-u}(x) = 1$ for any $u \in V$, so that $|R| \leq |x| \leq h(a,b) / \Delta \leq h(a,b) \leq f_g(a,b) \leq a$ %(respectively, $|R|\leq f_{\text{bip}}(a,b)\leq|a|$)  
and hence $F_V(x)\neq 0$.  

Now assume $|V|=n > 2$ and let $u \in V$ and write $V' = V- u$.
It suffices to show part (ii) since it implies that $|F_V(x)/F_{V-u}(x)|=|1+R(x)|\geq 1-a>0$, implying that $F_V(x)\neq 0.$
For notational convenience, we will write $F_U$ instead of $F_U(x)$ for $U\subseteq V'$ in what follows.

We establish a fact, namely \eqref{eq:telescopub} below, that we will use repeatedly. For $A = \{a_1, \ldots, a_k\}  \subseteq V'$, write $A_i = \{a_1, \ldots, a_i \}$ and $A_0 = \emptyset$. Then by induction we know that for each $i$, 
\[
\left| \frac{F_{V'-A_{i}}}{F_{V'-A_{i+1}}} - 1 \right| 
= \left| \frac{F_{V'-A_{i}}}{F_{V'-A_{i} - a_{i+1}}} - 1 \right|
< a 
\:\:\:\text{i.e.}\:\:\:
\frac{1}{1+a} \leq \left| \frac{F_{V'-A_{i+1}}}{F_{V'-A_{i}}} \right| \leq \frac{1}{1-a}.
\]
Using the upper bound and the fact that $F_{V'-A_i}\neq 0$ for each $i$ by induction, we have
\begin{equation}\label{eq:telescopub}
\left| \frac{F_{V'-A}}{F_{V'}} \right| 
= \prod_{i=0}^{k-1} \left| \frac{F_{V'-A_{i+1}}}{F_{V'-A_{i}}} \right| 
\leq (1-a)^{-k} = (1-a)^{-|A|}.
\end{equation}

Returning to the induction step for the claim, fix an ordering on the edges of $G$ such that the edges incident with $u$ are largest in the ordering. In particular, let $s_1, \ldots, s_{\ell}$ be the neighbours of $u$ (so $\ell \leq \Delta$) and assume that $us_1 > us_2 > \cdots > us_{\ell}$ are largest in the ordering and that the remaining edges are ordered arbitrarily.
% For convenience let us also order these vertices as $s_1 > s_2 > \cdots >s_{\ell}$.

Now, starting with \eqref{eq:R}, we can express $R$ as follows:
\begin{equation}
\label{eq:R2}
R  = \sum_{\substack{T \in \mathcal{F}_{V,u} \\ |T| \geq 1 }}x^{|T|}\frac{F_{V'-V(T)}}{F_{V'}}
= \sum_{\substack{S \subseteq N(u) \\ S \not= \emptyset}} x^{|S|}
\sum_{\substack{F \in \mathcal{F}^*_{V',S} \\ F \cup uS \text{ is BCF} }}
x^{|F|} \frac{F_{V' - S - V(F)}}{F_{V'}}\, .
\end{equation}
Here we write $F \cup uS$ to mean  $F \cup \{us: s \in S\}$. The second equality holds because $\mathcal{F}_{V,u} = \{F \cup uS: S \subseteq N_G(u), F \in \mathcal{F}^*_{V',S} \}$ (by our choice of edge ordering).
%where we have used that all edges in $uS$ come last in $\sigma$. 
We have also used that if $F\cup uS = T \in \mathcal{F}_{V,u}$ then $V' -V(T) = V' - S - V(F)$.

Next we compare the inner sum in \eqref{eq:R2} with the same sum but over $F \in \mathcal{F}_{V', S}$ (which is equal to $1$ by \eqref{eq:recurF}). So we have
\begin{align}
    \Bigg|1 -  \sum_{\substack{F \in \mathcal{F}^*_{V',S} \\ F \cup uS \text{ is BCF}  }}
x^{|F|} \frac{F_{V' - S - V(F)}}{F_{V'}} \Bigg| 
&=
 \Bigg|\sum_{\substack{F \in \mathcal{F}_{V',S} }}
x^{|F|} \frac{F_{V' - S - V(F)}}{F_{V'}}  -  \sum_{\substack{F \in \mathcal{F}^*_{V',S} \\ F \cup uS \text{ is BCF}  }}
x^{|F|} \frac{F_{V' - S - V(F)}}{F_{V'}} \Bigg| \notag \\
&= 
 \Bigg| \sum_{F \in \mathcal{A} \cup \mathcal{B}}
x^{|F|} \frac{F_{V' - S - V(F)}}{F_{V'}} \Bigg|\, , \label{eq:ABbound}
\end{align}
where $\mathcal{A} = \mathcal{F}_{V',S} \setminus \mathcal{F}^*_{V',S}$, i.e.\ the set of $F \in \mathcal{F}_{V', S}$ where some component of $F$ contains (at least) two vertices of $S$ and $\mathcal{B}$ is the set of $F \in \mathcal{F}^*_{V', S}$ such that $F\cup uS$ contains a broken circuit, i.e.\ where some component of $F$ contains some $s \in S$ and some $t \in N(u)$ with $us < ut$ (this is the only way a broken circuit can occur by Proposition~\ref{pr:BFStree}). Now continuing from the expression above and applying the upper bound~\eqref{eq:telescopub} we have
\begin{align*}
    \left| \sum_{F \in \mathcal{A}\cup\mathcal{B}}
x^{|F|} \frac{F_{V' - S - V(F)}}{F_{V'}} \right| 
\leq  \sum_{F \in \mathcal{A}\cup\mathcal{B}}
|x|^{|F|} \left| \frac{F_{V' - S - V(F)}}{F_{V'}} \right|
&\leq \sum_{F \in \mathcal{A}\cup\mathcal{B}}
|x|^{|F|} (1-a)^{-|V(F) \cup S|} \\
&\leq \sum_{F \in \mathcal{A}\cup\mathcal{B}}
\left( \frac{|x|}{1-a}\right)^{|F|}  (1-a)^{-|S|},
\end{align*}
where we have used that $|V(F) \cup S| \leq |F| + |S|$ for all $F \in \mathcal{A}\cup\mathcal{B}$ since $(V(F) \cup S, F)$ has at most $|S|$ components. 
For $s,t\in N(u)$ we write $s<t$ if $us<ut$ for brevity.
Note that every $F \in \mathcal{A} \cup \mathcal{B}$ can be obtained by taking two vertices $s \in S$ and $t \in N(u)$ with $t>s$, and taking $F$ to be the union of some $T \in \mathcal{F}_{V',s,t}$ (defined in Section~\ref{sec:BCFPrelim}) with some $F' \in \mathcal{F}_{V', S-s}$ (although this procedure can also give objects that are not in $\mathcal{A}\cup\mathcal{B}$).
Writing $y := |x|/(1-a) \leq \ln b/(\Delta b)$, we can thus bound the right hand side of the expression above as
\begin{align}
\sum_{F \in \mathcal{A}\cup\mathcal{B}}
\left( \frac{|x|}{1-a}\right)^{|F|}  (1-a)^{-|S|}
&\leq \sum_{\substack{(s,t)\in S\times N(u)  \\ t>s}}  \sum_{T \in \mathcal{F}_{V',s,t}}  \sum_{F' \in \mathcal{F}_{V',S-s}} y^{|T| + |F'|} (1-a)^{- |S|} \nonumber 
\\ &= (1-a)^{-|S|} 
 \sum_{\substack{(s,t) \in S\times N(u)  \\  t>s}} 
 \sum_{T \in \mathcal{F}_{V',s,t}} y^{|T|}  \sum_{F' \in \mathcal{F}_{V',S-s}} y^{|F'|} \nonumber \\
 &\leq  (1-a)^{-|S|}\sum_{\substack{(s,t)\in S\times N(u)  \\  t>s}}  T_{G-u, s, t}(y) \prod_{t \in S-s}T_{G-u,t}(y) \nonumber \\
 &\leq 
 \sum_{\substack{(s,t)\in S\times N(u) \\ t>s}} 
 \left( \frac{b}{1-a} \right)^{|S|} \frac{ (\ln b)^{g-3}D(b)}{b}, \label{eq:bound on sum over AcupB}
\end{align}
assuming $G$ has girth at least $g$, where for the penultimate inequality we have used \eqref{eq:Forest-to-tree}.
For the last inequality, we have used Lemmas~\ref{lem:tree} and~\ref{lem:DR-tree} as well as the fact that there is no path in $G$ between vertices in $S$ with $g-2$ or fewer vertices since such a path together with $u$ would create a cycle of length at most $g-1$.
% Using that in a bipartite graph any path between $s$ and $t$ must have even length we can by Lemma~\ref{lem:DR-tree} replace~\eqref{eq:bound on sum over AcupB} by
% \begin{align}
%   \sum_{\substack{(s,t)\in S\times N(u) \\ t>s}} 
%  \left( \frac{b}{1-a} \right)^{|S|}\frac{B(b)}{b} \label{eq:bound on sum over AcupB bipartite}
% \end{align}
% when $G$ is bipartite.

Returning to \eqref{eq:ABbound}, we deduce that if $G$ has girth at least $g$,
\begin{align}\label{eq:bound on *BCFforest sum}
 \Bigg| \sum_{\substack{F \in \mathcal{F}^*_{V',S} \\ F \cup uS \text{ is } BCF }}
x^{|F|} \frac{F_{V' - S - V(F)}}{F_{V'}}  \Bigg| 
\leq 1 + \sum_{\substack{(s,t)\in S\times N(u) \\  t>s}} \left( \frac{b}{1-a} \right)^{|S|} \frac{(\ln b)^{g-3}D(b)}{b},
\end{align}
and so using~\eqref{eq:R2},~\eqref{eq:bound on *BCFforest sum}, 
\begin{align*}
|R|  
&\leq \sum_{\substack{S \subseteq N(u) \\ S \not= \emptyset}} |x|^{|S|}
 \Bigg| \sum_{\substack{F \in \mathcal{F}^*_{V',S} \\ F \cup uS \text{ is } BCF }}
x^{|F|} \frac{F_{V' - S - V(F)}}{F_{V'}}  \Bigg| \\
&\leq \sum_{\substack{S \subseteq N(u) \\ S \not= \emptyset}} |x|^{|S|} +
\sum_{\substack{S \subseteq N(u) \\ S \not= \emptyset}}
\sum_{\substack{(s,t)\in S \times N(u) \\ t>s}} \left( \frac{|x|\cdot b}{1-a} \right)^{|S|}\frac{(\ln b)^{g-3}D(b)}{b},
\end{align*}
which can be further bounded using Lemma~\ref{lem:derivcount}, and $|x| \leq h(a,b) / \Delta = (1-a) (\ln b) /(\Delta b)$, by
\begin{align*}
&(1+|x|)^{\Delta} - 1 + \frac{(\ln b)^{g-3}D(b)}{b} \left[\binom{\Delta}{2} \left( \frac{\ln b}{\Delta} \right) \left(1 + \frac{\ln b}{\Delta} \right)^{\Delta - 1} - 1 \right] \\
<&\exp\left(\frac{(1-a)\ln b}{b}\right) - 1 + \frac{b(\ln b)^{g-1}}{2(1 - \ln b)} \\
=& f_g(a,b) \leq a,
\end{align*}
% Arguing in a similar way we get in case $G$ is bipartite
% \begin{align*}
% |R|  &\leq (1+|x|)^{\Delta} - 1 + \frac{B(b)}{b} \left[\binom{\Delta}{2} \left( \frac{\ln b}{\Delta} \right) \left(1 + \frac{\ln b}{\Delta} \right)^{\Delta - 1} - 1 \right] \\
% &<\exp\left(\frac{(1-a)\ln b}{b}\right) - 1 + \frac{b(\ln b)^{3}}{2(1 - (\ln b)^2)} \\
% &= f_{\text{bip}}(a,b) \leq a,
% \end{align*}
proving the claim and the theorem.
%\viresh{Break down the above calculation more?} 
\end{proof}

\begin{remark}
Our proof strategy can also be used to give a compact and simple proof of the result of F\'ernandez and Procacci~\cite{Fernandez}. Indeed, just set $b=1+a$ and follow the start of our proof and as one arrives at~\eqref{eq:R2},  simply bound the inner sum by $((1+a)/(1-a))^{|S|}$ using~\eqref{eq:Forest-to-tree} and Lemma~\ref{lem:tree}. The bound then follows from minimizing $ \tfrac{1+a}{(1-a)\ln(a)}$ for $a<1$ and yields the constant less of~\cite{Fernandez}, cf.~\cite[Section 4.3.3]{PRsurvey}.
\end{remark}

We end the section with a proof of Theorem~\ref{thm:large girth}.

\begin{proof}[Proof of Theorem~\ref{thm:large girth}]
For $g\geq 3$, let 
\[
X_g =\{(a,b): f_g(a,b) \leq a,\, a\in [0,0.9],\, b\in [1.1,e)\}\, ,
\]
and let
\[
K_g = \min_{(a,b)\in X_g} \frac{1}{h(a,b)} = \min_{(a,b)\in X_g} \frac{b}{(1-a)\ln b}\, .
\]
First note that $X_g$ is non-empty (e.g. by the first line of Figure 1 and the fact that $X_3\subseteq X_g$).
%by the first line of Figure 1, $(0.329963, 1.434786) \in X_3\subseteq X_g$. 
Note also that the constraint $f_g(a,b)\leq a$ implies that $b\leq e-\epsilon_g$ for some $\epsilon_g>0$. It follows that $K_g$ is well-defined as the minimum of a continuous function over the non-empty compact set $X_g$.  

Since $X_3\subseteq X_4\subseteq \ldots$, we see that $(K_g)$ is a monotone decreasing sequence. Since $K_g\geq 0$ for all $g\geq 3$ we conclude that $\lim_{g\to\infty}K_g$ exists.

Now let 
\[
X_\infty=\left\{(a,b): \exp\left(\frac{(1-a)\ln b}{b}\right) - 1\leq a, a\in[0,0.9], b\in [1.1,e]\right\}\, ,
\]
and let $K_\infty=\min_{(a,b)\in X_\infty}1/h(a,b)$ (again this is well-defined since $X_\infty$ is non-empty and compact). We  first find an explicit expression for $K_\infty$ and then show that  $K_\infty=\lim_{g\to\infty}K_g$. The result will then follow from Theorem~\ref{thm:fgbd}.

 Let $(a^*, b^*)\in X_\infty$ be such that $K_\infty=1/h(a^*, b^*)$. Since $b^\ast > 1$, we must have $a^\ast>0$ by the first constraint in the definition of $X_\infty$. It follows that 
\[
 \exp\left(\frac{(1-a^*)\ln b^*}{b^*}\right) - 1= a^*
\]
else we could decrease the value of $a^\ast$ slightly and obtain a smaller value of the objective function $1/h$. Rearranging, we obtain
\begin{align}\label{eq:constraintrearrange}
    \frac{\ln b^*}{b^*}= \frac{\ln(1+a^*)}{1-a^*}\, ,
\end{align}
so that $h(a^*, b^*)=\ln(1+a^*)$. Our task is therefore to choose $a^*\in[0,0.9]$ maximal whilst satisfying the constraint~\eqref{eq:constraintrearrange} for some $b^\ast\in [1.1,e]$.
Since $\ln(x)/x$ is increasing on the interval $[1.1,e]$ and $\ln(1+x)/(1-x)$ is increasing on $[0,0.9]$, we conclude from~\eqref{eq:constraintrearrange} that $b^\ast=e$ and $a^*$ is the unique solution to 
\[
\frac{1}{e}= \frac{\ln(1+x)}{1-x}
\]
i.e. 
\[
a^*= e W(e^{2/e-1})-1 \approx 0.295741\, ,
\]
where $W$ denotes the the Lambert $W$ function\footnote{For $x>0$, $W(x)$ is the unique solution to the equation $We^W=x$.}. Since $h(a^*, b^*)=\ln(1+a^*)$ we conclude that
\[
K_\infty= \frac{1}{h(a^*, b^*)} = \frac{1}{1+\ln\left(W(e^{2/e-1})\right)}\approx 3.85977\, .
\]

It remains to show that $K_\infty=\lim_{g\to\infty}K_g$.

First note that for $g\geq 3$, $X_g\subseteq X_\infty$ and so $K_g\geq K_\infty$. In particular, $\lim_{g\to\infty}K_g\geq K_\infty$.
On the other hand, for any $\epsilon\in(0,1)$, we have $(a^\ast, b^\ast-\epsilon)\in X_g$ for $g$ sufficiently large so that
\[
\lim_{g\to\infty} K_g\leq 1/h(a^\ast, b^\ast-\epsilon)\, .
\]
Since $h$ is continuous, taking $\epsilon\to 0$, we see that $\lim_{g\to\infty}K_g\leq 1/h(a^\ast, b^\ast)=K_\infty$.

%Note that
%\[\bigcup_{g\geq 3}X_g = X'_{\infty}:= \left\{(a,b): \exp\left(\frac{(1-a)\ln b}{b}\right) - 1\leq a, a\in[0,1/2], b\in [1,e)\right\}\, .\]

\end{proof}

\section{The forest generating function}\label{sec:forests}
Implicit in our proof of Theorem~\ref{thm:fgbd} above is a bound for the zeros of the forest generating function (defined at~\eqref{eq:forestgnfn}). Explicitly, in~\eqref{eq:ABbound} one can restrict to just the set $\mathcal{A}$.
With this approach we cannot obtain better bounds than the bounds we obtained for the BCF forest generating function.
However, with a different argument we can prove the much stronger bound of Theorem~\ref{thm:forest}.

Before we proceed we require a lemma about a path generating function.
We emphasize that in this section we have to allow multiple edges and loops and thus work with multigraphs.
For a multigraph $G$ and two of its vertices $u,v$ we denote by $\mathcal{P}_{G,u,v}$ the collection of all (edge sets of) paths in $G$ from $u$ to $v$.

\begin{lemma}\label{lem:path generating}
Let $G=(V,E)$ be a multigraph and let $v\in V$. Assume that all vertices but $v$ have degree at most $\Delta\in \mathbb{N}$.
Then for any vertex $u\neq v$ and any $c\in [0,1]$,
\[
\sum_{P\in \mathcal{P}_{G,u,v}} \left(\tfrac{c}{\Delta}\right)^{|P|}\leq c.
\]
\end{lemma}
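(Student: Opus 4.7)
The plan is to induct on the number of vertices of $G$. Setting $x := c/\Delta$ and $f_G(a,b) := \sum_{P \in \mathcal{P}_{G,a,b}} x^{|P|}$ for brevity, the goal is to show $f_G(u,v) \leq c$ whenever $u \neq v$ and the hypothesis of the lemma holds.

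For the base case $|V(G)| = 2$, all paths from $u$ to $v$ are single edges, and there are at most $\deg(u) \leq \Delta$ of them (counting parallel edges), so $f_G(u,v) \leq \Delta x = c$.

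For the inductive step I would group paths by their first edge. Since a path visits each vertex at most once, after a first step across some edge $e = uw$ the remainder must be either the empty path (if $w=v$) or a path from $w$ to $v$ in the multigraph $G - u$ obtained by deleting $u$. This yields the recursion
\[
f_G(u,v) = \sum_{e = uw \in E(G)} x \cdot g(w),
\]
where $g(v) := 1$ and $g(w) := f_{G-u}(w,v)$ for $w \neq v$. The multigraph $G - u$ still satisfies the hypothesis of the lemma, since $v$ is untouched and every other vertex's degree can only decrease when we remove $u$. So the inductive hypothesis gives $g(w) \leq c \leq 1$ for $w \neq v$, and trivially $g(v) = 1$; hence $g(w) \leq 1$ for every neighbor $w$. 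Combined with $\deg(u) \leq \Delta$ (since $u \neq v$), this delivers
\[
f_G(u,v) \leq x \cdot \deg(u) \cdot 1 \leq \tfrac{c}{\Delta} \cdot \Delta = c,
\]
as required.

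I do not anticipate any real obstacle; the argument is structurally clean once the recursion is in place. The one thing to be careful about is the multigraph convention: parallel edges between $u$ and a common neighbour $w$ must each contribute a separate first step (so that the multiplicity $m_{uw}$ is correctly absorbed into the sum over $e = uw$), and loops at vertices other than $u$ or $v$ play no role in paths but only inflate degrees, which loosens the bound in the right direction. The bound $c \leq 1$ is used precisely so that we can treat the "short" contribution $g(v)=1$ and the "inductive" contribution $g(w) \leq c$ uniformly by $1$ without losing the factor of $\Delta$ in the degree.
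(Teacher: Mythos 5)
Your proof is correct and follows essentially the same route as the paper: induction on the number of vertices, grouping paths from $u$ to $v$ by their first edge, applying the inductive hypothesis to each $f_{G-u}(w,v)$, and observing that removing $u$ preserves the degree hypothesis. The only cosmetic difference is that the paper keeps the split between the $d$ edges going directly to $v$ (each contributing $x$) and the $D$ edges to other neighbours (each contributing at most $xc$), bounding the total by $dx + Dxc \le (d+D)x \le c$, whereas you collapse both contributions to $\le 1$ before summing; both give the same conclusion under $c \le 1$.
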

\begin{proof}
The proof is by induction on the number of vertices of $G$. If this number is two, then the statement is clearly true.
Next suppose that $G$ has three or more vertices and let $u\in V$, $u\neq v$.
Let $e_1,\ldots,e_d$ be the edges between $u$ and $v$ (if there is no edge between $u,v$ we set $d=0$) and let $u_{1},\ldots,u_{D}$ be the remaining neighbours of $u$, counted with multiplicity. Note that $d+D\leq \Delta$.
Write $x=\tfrac{c}{\Delta}$.
Then, by induction,
\begin{align*}
\sum_{P\in \mathcal{P}_{G,u,v}}x^{|P|}&=\sum_{i=1}^d x+\sum_{i=1}^{D}x\sum_{P\in \mathcal{P}_{G-u,u_i,v}}x^{|P|}
\\
&\leq dx+xDc= d\tfrac{c}{\Delta}+D\tfrac{c^2}{\Delta}\leq c,
\end{align*}
as desired.
\end{proof}
We note that the lemma can be improved if $G$ is assumed to be a simple graph, but in our proof of Theorem~\ref{thm:forest} below we need it in its present form.
\begin{proof}[Proof of Theorem~\ref{thm:forest}]
We start by proving an identity. 
For an edge $e=uv$ of $G$ it holds that
\begin{equation}\label{eq:identity forest gen}
Z_G(x)=Z_{G/e}(x)+\sum_{P\in \mathcal{P}_{G,u,v}}x^{|P|}Z_{G/P}(x), 
\end{equation}
where $G/e$ and $G/P$ denote the multigraphs obtained from contracting the edge $e$, respectively the path $P$ (meaning that we keep any parallel edges/loops that arise).
To prove this, note that a forest contributing to $Z_G$ either contains $u$ and $v$ in the same component or not.
The latter forests correspond bijectively to the forests of $G/e$ via $F\mapsto F'=(F+e)/e$ and satisfy $|F|=|F'|$.
The forests $F$ for which $u$ and $v$ are in the same component can be uniquely decomposed into a path $P$ from $u$ to $v$ and a forest $F''$ in $G/P$ such that $|F|=|P|+|F''|$ and conversely a path $P$ from $u$ to $v$ together with a forest $F''$ in $G/P$ yields a forest in $G$ for which $u$ and $v$ are in the same component.
%Indeed, such a forest $F$ contains a unique path $P$ from $u$ to $v$. Let $T$ be the component of $F$ that contain $P$. Then $T\setminus P$ is a tree in $G/P$. 
This shows the desired decomposition and proves~\eqref{eq:identity forest gen}.

Henceforth we assume $|x|\leq 1/(2\Delta)$. Using~\eqref{eq:identity forest gen} we now prove the following claim by induction on the number of edges, which directly implies the theorem. 
\begin{itemize}
    \item[(i)] For any edge $e=uv$, where $v$ is the vertex of potentially unbounded degree, we have
    \[\left |\frac{Z_{G}(x)}{Z_{G/e}(x)}-1\right|\leq 1/2,\]
    \item[(ii)] $Z_{G}(x)\neq 0$.
\end{itemize}
If $G$ has no edges then $Z_{G}(x)=1$ and (i) is vacuous.
Next assume that $G$ has at least one edge and let $v$ be the vertex of largest degree and let $uv\in E$.
Clearly (ii) follows from (i) and so it suffices to show (i).
Since by induction $Z_{G/e}(x)\neq 0$ the ratio $\frac{Z_{G}(x)}{Z_{G/e}(x)}$ is well defined and by~\eqref{eq:identity forest gen} it satisfies,
\begin{align}\label{eq:step 1 forest}
\left|\frac{Z_{G}(x)}{Z_{G/e}(x)}-1\right|\leq \sum_{P\in \mathcal{P}_{G,u,v}} |x|^{|P|}\left|\frac{Z_{G/P}(x)}{Z_{G/e}(x)}\right|.    
\end{align}
For a path $P$ contributing to the sum above let $f=vw$ be its last edge and let $P^*$ be the path obtained from $P$ by removing $f$ from $P$ and adding $e$. Then, since $G/P = G/P^*$ we have $Z_{G/P}(x)=Z_{G/P^*}(x)$. Next let $e_1=e$ and write $P^*=(e_1,e_2,\ldots,e_k)$ and let $P^*_i=(e_1,e_2,\ldots,e_i)$. We note that for each $i$, the multigraph $G/P^*_i$ has at most one vertex of unbounded degree, the vertex corresponding to $v$ after the contraction, and $G/P^*_{i}=(G/P^*_{i-1})/e_i$. 
Therefore by induction and (ii) we can write 
\[
\frac{Z_{G/P^*}(x)}{Z_{G/e}(x)}=\prod_{i=1}^{k-1}\frac{Z_{G/P^*_{i+1}}(x)}{Z_{G/P^*_i}(x)}.
\]
Next by induction and (i), and the triangle inequality we have $\left|\frac{Z_{G/P^*_{i}}(x)}{Z_{G/P^*_{i+1}}(x)}\right|\geq 1/2$  and therefore $\left|\frac{Z_{G/P^*}(x)}{Z_{G/e}(x)}\right|\leq 2^{|P|-1}$.
Now returning to~\eqref{eq:step 1 forest}, we obtain, since $|x|\leq \tfrac{1}{2\Delta}$,
\[
\left|\frac{Z_{G}(x)}{Z_{G/e}(x)}-1\right|\leq 1/2\sum_{P\in \mathcal{P}_{G,u,v}} (2|x|)^{|P|}\leq 1/2,
\]
by Lemma~\ref{lem:path generating} and this finishes the proof.
\end{proof}

We note that our proof shows that Theorem~\ref{thm:forest} is in fact true for multigraphs.
In the introduction we mentioned that the radius of the zero-free disc in Theorem~\ref{thm:forest} cannot be replaced by $\tfrac{1}{\Delta-2}$. 
The multigraph consisting of two vertices connected with $\Delta$ parallel edges has $-1/\Delta$ as a zero of its forest generating function and so for multigraphs we cannot replace the radius by $1/\Delta$.

% It is tempting to try to adapt the proof approach above to the BCF forest generating function. We suspect that it is not possible to do this very directly. Indeed, since the BCF forest generating function only `sees' its underlying simple graph, we could essentially improve the bound in Lemma~\ref{lem:path generating} by a factor $1/\Delta.$ This would then result in a bound on the modulus of chromatic zeros that contradicts known results for complete bipartite graphs~\cite[Section 9]{Sokalsurvey}.

%\guus{Feel free to remove the final paragraph.}
\subsection*{Acknowledgement}
We thank Ferenc Bencs for correcting an incorrect reference to~\cite{BencCsikvari22} in an earlier version.
We also thank the referees for their careful reading and suggestions.  

\section{Concluding remarks}
To conclude, we collect some remarks on possible extensions and further directions of research.

\subsubsection*{Bipartite graphs}
We can obtain better bounds on the chromatic zeros for bipartite graphs than the bounds we obtained for graphs of girth at least $4$.
This is because Lemma~\ref{lem:DR-tree} can be improved when all paths between $v_1$ and $v_2$ are of even length. The resulting bound can then be plugged into~\eqref{eq:bound on sum over AcupB}.
This can be used to show that if $G$ is bipartite, then its chromatic zeros lie in a disc of radius $K\Delta(G)$ where $K\leq 5.05$.

\subsubsection*{The double-rooted tree generating function}
Sokal~\cite{sokal} showed that for a graph $G$ of maximum degree $\Delta$ and vertex $v\in V(G)$, $T_{G,v}(x)\leq T_{\mathbb T, r}(x)$ for all $x>0$ where $\mathbb T$ is the infinite $\Delta$-regular tree with root $r$. This in turn implies Lemma~\ref{lem:tree}. It is less clear what graph $G$ and pair of vertices $v_1, v_2\in V(G)$ maximises the double-rooted tree generating function $T_{G, v_1, v_2}(x)$ for a given $x>0$. Further understanding of this problem would likely lead to an improvement on Lemma~\ref{lem:DR-tree} and in turn an improvement to the constant $K$ in Theorem~\ref{thm:main}.

\subsubsection*{Real zeros}
Based on the polymer framework, Dong and Koh~\cite{Dongreal} managed to prove a bound of $5.664\Delta$ for \emph{real} chromatic zeros of graphs of maximum degree at most $\Delta$. It seems likely that with  our approach this bound can be improved, but this is not automatic. 
In particular one would need to devise improved bounds on the (double) rooted tree generating functions evaluated at negative numbers.

\subsubsection*{Multivariate generating functions}
The reader can check that for both the forest and BCF forest generating functions our zero-freeness results extend to the multivariate setting where we equip each edge with its own variable. That is, for a graph $G=(V,E)$ we let $\mathbf{x}=(x_e : e\in E)$ and define
\begin{align*}
    Z_G(\mathbf{x}) = \sum_{\substack{F\subseteq E: \\(V,F) \text{ is a forest}}} \prod_{e\in F}x_e
\end{align*}
and define $F_G(\mathbf{x})$ similarly. Then, for example, one can show that $Z_G(\mathbf{x})$ has no zeros in the complex polydisc $\{\mathbf{x} : |x_e|\leq 1/(2\Delta) \text{ for all } e\in E\}$.

It is also possible to equip each tree $T$ in $G$ with its own variable $x_T$ and consider the generating function
\[
\hat Z_G(\mathbf{x}) = \sum_{\substack{F\subseteq E: \\(V,F) \text{ is a forest}}} \prod_{T\in F}x_T\, ,
\]
where $\mathbf x$ now denotes the vector of all the $x_T$'s and  we write $T\in F$ to mean that $T$ is a (non-trivial) component of $F$. We can define $\hat F_G(\mathbf x)$ similarly. It is however not clear how to extend our proof for the forest generating function (nor for the BCF forest generating function) in this setting. 
If possible somehow, this would be a way to leverage the result on the forest generating function to obtain results for the chromatic polynomial (since we can recover $\hat F_G(\mathbf x)$ from $\hat Z_G(\mathbf{x})$ by simply setting variables $x_T$ corresponding to non-BCF trees $T$ to 0).

\subsubsection*{The anti-ferromagnetic Potts model}
Sokal~\cite{sokal} extended his proof of the boundedness of the chromatic zeros to the partition function of the anti-ferromagnetic Potts model. It is not clear whether this is also possible with our approach and it would be interesting to see if a better bound can be obtained via our approach.

\subsubsection*{Optimal zero-free discs}
Theorem~\ref{thm:main} naturally raises the question (already posed by Sokal~\cite{sokal})
of determining the least $K$ for which the theorem holds. Perhaps an easier question to start with is to determine the minimum value of $K_\infty=\lim_{g\to\infty}K_g$ that one could achieve in Theorem~\ref{thm:large girth}. A more difficult question is to determine the minimum value of $K_g$ for each $g$ and the extremal graph or sequence of graphs that witness this minimum value.

\bibliographystyle{numeric}
\bibliography{chromatic}
\end{document}